\theoremstyle{plain}
\newtheorem{thm}{Theorem}[section]
\newtheorem{prop}[thm]{Proposition}
\newtheorem{lem}[thm]{Lemma}
\newtheorem{cor}[thm]{Corollary}
\theoremstyle{definition}
\newtheorem{ex}[thm]{Example}
\newtheorem{rem}[thm]{Remark}
\numberwithin{equation}{section}
\def\filename{\texttt{\jobname.tex}} 
\newcommand{\newoperator}[2]{\DeclareMathOperator{#1}{#2}}
\newoperator{\var}{var}
\newoperator{\cov}{cov}
\def\E{\mathbb{E}}
\def\P{\mathbb{P}}
\def\R{\mathbb{R}}
\def\Z{\mathbb{Z}}
\def\inlaw{\stackrel{d}{=}}
\title{A remark on elephant random walks via the classical
law of the iterated logarithm for self-similar Gaussian
processes} 
\author{Shuhei Shibata\thanks{Joint Graduate School of Mathematics for Innovation, Kyushu
   University, Fukuoka, 819-0395, JAPAN. \\
   \textit{E-mail address}: \texttt{shibata.shuhei.746@s.kyushu-u.ac.jp}} and Tomoyuki Shirai\thanks{Institute of Mathematics for Industry, Kyushu
   University, Fukuoka, 819-0395, JAPAN. 
   \textit{E-mail address}: {\texttt{shirai@imi.kyushu-u.ac.jp}\\
   2020 Mathematics Subject Classification. 60K35; 60G15; 60G17; 60G18; 60F15.}}
   }
\date{}
\begin{document}
\maketitle
\begin{abstract}
This paper investigates whether two independent Elephant Random Walks (ERWs) on $\Z$, each with a different memory parameter, can meet infinitely often, extending the work of Roy, Takei, and Tanemura. We also study the asymptotic behavior of their distance by providing an elementary and accessible proof of the classical Law of the Iterated Logarithm (LIL) for centered, continuous, self-similar Gaussian processes under a certain decay condition on the covariance kernel. 
\end{abstract}

\section{Introduction}

The asymptotic behavior of random walks with long-range
memory has been extensively studied in recent years. Among
such models, the Elephant Random Walk (ERW), introduced by
Sch\"{u}tz and Trimper \cite{schutz2004elephants}, has attracted significant attention
as a non-Markovian stochastic process that 
exhibits a phase transition from diffusive to superdiffusive behavior
(cf. \cite{baur2016elephant, bercu2017martingale, 
coletti2017central, coletti2017strong,
kubota2019gaussian, guerin2023fixed, qin2023recurrence}). 
In this paper, we investigate the interaction between two
independent ERWs on the integer lattice
$\Z$. Specifically, we focus on the phenomenon of
infinite collisions, inspired by the result of Roy, Takei,
and Tanemura \cite{roy2024often}, who studied the case where both ERWs have
the same memory parameter. Our work extends their result by
considering different memory parameters for the two
walks. In addition, we obtain asymptotic results for the
distance between the two ERWs. 

We recall the definition of the one-dimensional ERW. 
The first step $X_1$ of the elephant is $+1$ with
probability $q$, and $-1$ with probability $1-q$. 
For each $n = 1,2,\dots$, given the past steps $X_1,X_2,\dots, X_n$, we define the
$(n+1)$-th step by 
\begin{equation*}
    X_{n+1} =
    \begin{cases}
      +X_{U_n} & \text{with probability} \quad p, \\
      -X_{U_n} & \text{with probability} \quad 1-p, 
    \end{cases}
\end{equation*}
where $p \in [0,1]$ is called the \textit{memory parameter}, and $U_n$ is uniformly
distributed on $\{1,2,\dots,n\}$ and independent of
$\{X_i\}_{i=1}^n$. 
The sequence $\{X_i\}_{i=1}^{\infty}$ generates the
one-dimensional ERW $\{{S_n}\}_{n=0}^{\infty}$ defined by 
\begin{equation*}
    S_0 := 0, \quad\text{and}\quad S_n = \sum_{i=1}^n X_i
     \quad\text{for} \quad n=1,2,\dots. 
\end{equation*}

As is well known, the asymptotic behavior of the ERW mainly
depends on the memory parameter $p$ which takes the values
from $0$ to $1$. In the diffusive regime $p<3/4$ as well as
in the critical regime $p=3/4$, the following asymptotic normality of
the ERW is established: as $n\to \infty$,
\begin{equation*}
    \frac{S_n}{\sqrt{n}}\xrightarrow{d} N\left(0,\frac{1}{3-4p}\right) \quad\text{for} \quad 0<p<3/4
\end{equation*}
and
\begin{equation*}
    \frac{S_n}{\sqrt{n\log n}}\xrightarrow{d} N\left(0,1\right) \quad\text{for} \quad p=3/4
\end{equation*}
 (cf. \cite{baur2016elephant,
bercu2017martingale,  coletti2017central}). On the other hand, in the
superdiffusive regime $p>3/4$, the limiting distribution of the ERW  is not Gaussian:
\begin{equation}\label{superdiffusive L}
    \lim_{n\to \infty} \frac{S_n}{n^{2p-1}}=L_{p,q} \quad
       \textit{a.s.},
\end{equation}
where $L_{p,q}$ has a continuous distribution which is not Gaussian. (cf. \cite{bercu2017martingale, coletti2017central,
kubota2019gaussian, guerin2023fixed}).\par
In the diffusive and critical regimes, Roy, Takei, and Tanemura \cite{roy2024often} established the
asymptotic results for the distance between two independent
ERWs with the same memory parameter on
the integer lattice $\mathbb{Z}$.
\begin{thm}[\cite{roy2024often}]\label{asymptotic behavior the distance with same parameters}
Let \(\{S_n\}_{n=0}^{\infty}\) and \(\{S'_n\}_{n=0}^{\infty}\) be two independent ERWs on $\Z$ with the same memory parameter \(p\).
\begin{itemize}
    \item[(a)] If $0 < p < 3/4$ \text{then}
    \begin{equation*}
      \limsup_{n \to \infty}\pm \frac{S_n - S'_n}{\sqrt{2 n
       \log \log n}} = \frac{\sqrt{2}}{\sqrt{3 - 4p}} \quad
       \text{a.s.}
    \end{equation*}
    \item[(b)] If $p = 3/4$ \text{then}
    \begin{equation*}
      \limsup_{n\to\infty}\pm\frac{S_n - S'_n}{\sqrt{2 n\log
       n\log\log\log n}} = \sqrt{2} \quad \text{a.s.} 
    \end{equation*}
\end{itemize}
\end{thm}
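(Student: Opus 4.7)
The plan is to reduce both parts of the theorem to the paper's classical Law of the Iterated Logarithm for centered, continuous, self-similar Gaussian processes, by coupling the difference $D_n := S_n - S_n'$ with such a Gaussian process at the LIL scale. I would treat case $(a)$ first and then indicate the modifications for the critical regime $(b)$.

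For each ERW, introduce the Bercu-type martingale $M_n := \gamma_n S_n$ with $\gamma_n := \Gamma(n)\Gamma(2p)/\Gamma(n+2p-1) \asymp n^{1-2p}$. Its increments are of controllable size and its predictable quadratic variation satisfies $\langle M\rangle_n \asymp n^{3-4p}$ for $0<p<3/4$ and $\langle M\rangle_n \asymp \log n$ for $p=3/4$. A martingale strong invariance principle (Skorokhod embedding, or a quantitative Koml\'os--Major--Tusn\'ady type estimate for bounded-increment martingales) then produces, on an enlarged probability space, a standard Brownian motion $B$ such that $M_n - B(\langle M\rangle_n) = o(b_n')$ almost surely, where $b_n'$ is the martingale LIL scale. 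Running the same construction independently for $\{S_n'\}$ and subtracting, the difference $D_n$ is almost surely within $o(b_n)$ of a Gaussian process $Y_n$, with $b_n$ the conjectured LIL normalization.

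A direct covariance computation shows that the Gaussian limit $Y$ arising from the joint functional CLT for $(S_{\lfloor nt\rfloor},S'_{\lfloor nt\rfloor})$ is centered, continuous and self-similar of index $1/2$, with $\mathrm{Var}(Y_n)\sim 2n/(3-4p)$ in case $(a)$ and $\mathrm{Var}(Y_n)\sim 2n\log n$ in case $(b)$ (the latter after an additional logarithmic time change). Both kernels satisfy the covariance-decay hypothesis required for the paper's LIL, which therefore yields
\[
\limsup_{n\to\infty}\pm \frac{Y_n}{\sqrt{2n\log\log n}}=\frac{\sqrt{2}}{\sqrt{3-4p}}
\]
in case $(a)$, and the analogous $\sqrt{2}$ with normalization $\sqrt{2n\log n\log\log\log n}$ in case $(b)$. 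Since the coupling error is negligible at the LIL scale, the same values are inherited by $D_n$, which is exactly the stated conclusion.

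The main obstacle is arranging the strong invariance principle at precisely the correct scale: weak convergence of the rescaled ERW to its Gaussian limit is classical, but producing an almost sure approximation with error $o(b_n)$ requires quantitative control of the Skorokhod stopping times (or of the martingale KMT bound) together with sharp estimates on $\langle M\rangle_n$. The critical regime $p=3/4$ is the most delicate part, since $\langle M\rangle_n$ grows only logarithmically, which forces the triple logarithm in the normalization and calls for more careful handling of the time change. Once the Gaussian limit is identified, verifying the covariance-decay condition in both regimes is routine.
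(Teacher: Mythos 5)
Your overall strategy (couple each walk with a Gaussian process at the LIL scale, then read off the limsup from an LIL for the Gaussian approximant) is exactly the route the paper takes: note, though, that the paper does not prove this theorem itself but quotes it from Roy--Takei--Tanemura, and the strong approximation you propose to build by hand from the Bercu martingale is precisely the Coletti--Gava--Sch\"utz invariance principle that the paper simply cites (Lemma~\ref{lem3}). As written, that step is a genuine gap in your argument: a Skorokhod embedding or martingale KMT bound gives $M_n-B(\langle M\rangle_n)=o(\cdot)$ only in terms of the \emph{random} clock $\langle M\rangle_n$, and converting this to the deterministic time changes $n^{3-4p}$ (resp.\ $\log n$) with error $o(\sqrt{n\log\log n})$ (resp.\ $o(\sqrt{n\log n\log\log\log n})$) requires a quantitative rate for $\langle M\rangle_n$, since an increment of Brownian motion over an interval of length $\varepsilon t$ is of order $\sqrt{\varepsilon t\log(1/\varepsilon)}$ and is not automatically negligible at the LIL scale. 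You flag this as ``the main obstacle'' but do not resolve it; the clean fix is to invoke Lemma~\ref{lem3} directly, as the paper does.

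The second, more concrete, flaw is in case (b). After the coupling, the approximant is $\sqrt{n}\,\bigl(B(\log n)-B'(\log n)\bigr)=\sqrt{2n}\,\widetilde{B}(\log n)$, whose covariance $2\sqrt{st}\,(\log s\wedge\log t)$ is \emph{not} homogeneous, so assumption \eqref{assv} fails and Theorem~\ref{cLIL main result} does not apply to it, contrary to your claim that ``both kernels satisfy the covariance-decay hypothesis.'' The correct argument is the classical LIL for Brownian motion (or Theorem~\ref{cLIL main result} applied to $\widetilde{B}$ itself) evaluated along the times $u=\log n$, which is where the $\log\log\log n$ comes from, together with the small observation that the limsup along the sequence $\{\log n\}$ coincides with the continuous-time limsup because the gaps $\log(n+1)-\log n$ tend to $0$. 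Relatedly, in case (a) with equal parameters the difference of the two Brownian motions is $\sqrt{2}$ times a single Brownian motion, so the full self-similar machinery of Theorem~\ref{cLIL main result} is not needed there either (the paper makes exactly this remark); your covariance computation and the constant $\sqrt{2}/\sqrt{3-4p}$ are, however, correct.
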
 
In Theorem~\ref{asymptotic behavior the distance with same
parameters}, the memory parameters of the two elephants are
assumed to be identical. In this case, the LIL for the Brownian motion can be
applied to obtain their result, which leads to infinite collisions. However, when the memory
parameters are different, the LIL for self-similar
Gaussian processes is required instead.

Although the LIL has been extensively studied in a general
framework (cf. \cite{stout, taqqu1985survey,
bingham1986variants,  LedouxTalagrand}), we do not aim to treat it
in full generality in this paper. Rather, we present a
self-contained and transparent exposition within a
restricted yet sufficiently meaningful framework, motivated
by the analysis of infinite collisions of two independent ERWs.

\begin{thm}\label{cLIL main result}
Let $\{X(s)\}_{s\ge 0}$ be a centered continuous Gaussian
 process with covariance kernel $R(s,t)$ satisfying the
 following. 
\begin{itemize}
 \item[(i)] There exists $\rho >0$ such that for any
	    $c>0$ and $s,t >0$, 
\begin{equation}
R(cs, ct) = c^{2\rho} R(s,t). 
\label{assv}
\end{equation}
\item[(ii)] Under (i), there exists $\eta>0$  such that 
the function $h(x)=x^{-\rho}R(1,x) \ (x \ge 1)$ satisfies 
\begin{equation}\label{h order of convergence}
    h(x)=O((\log x)^{-\eta}) \text{\quad as $x\to \infty$}. 
\end{equation}
\end{itemize}
Then, we have 
\[
 \limsup_{s \to \infty}  \pm \frac{X(s)}{\sqrt{2 s^{2\rho} \log \log s}} 
= R(1,1)^{\frac{1}{2}} \quad a.s. 
\] 
\end{thm}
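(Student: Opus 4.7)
The plan is to follow the classical two-sided LIL recipe: an upper bound along a geometric subsequence combined with a maximal inequality, and a matching lower bound along a rapidly growing subsequence that exploits the correlation decay in (ii). Writing $\phi(s):=\sqrt{2s^{2\rho}\log\log s}$, self-similarity (i) gives $\{X(cs)\}_{s\ge 0}\stackrel{d}{=}\{c^{\rho}X(s)\}_{s\ge 0}$, so $X(s)/s^{\rho}\sim N(0,R(1,1))$ at every fixed point, and for $0<s\le t$ the correlation $\mathrm{Corr}(X(s),X(t))$ equals $h(t/s)/R(1,1)$, which by (ii) decays logarithmically in $t/s$. Since $-X$ has the same law as $X$, it suffices to prove $\limsup_{s\to\infty}X(s)/\phi(s)=R(1,1)^{1/2}$ a.s.

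For the upper bound, fix $\theta>1$ and $\varepsilon>0$ and set $s_n:=\theta^n$. A standard Mills-ratio estimate gives
\[
\P\bigl(X(s_n)>(1+\varepsilon)\sqrt{R(1,1)}\,\phi(s_n)\bigr)=O\bigl(n^{-(1+\varepsilon)^2}\bigr),
\]
so Borel--Cantelli yields $X(s_n)\le(1+\varepsilon)\sqrt{R(1,1)}\,\phi(s_n)$ eventually a.s. To bridge the gaps between $s_n$ and $s_{n+1}$, I would use self-similarity to write $\sup_{s\in[s_n,s_{n+1}]}X(s)\stackrel{d}{=}s_n^{\rho}\,M_\theta$ with $M_\theta:=\sup_{u\in[1,\theta]}X(u)$, which has finite mean by Fernique's theorem and Gaussian concentration by the Borell--TIS inequality. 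For $\theta$ sufficiently close to $1$ the resulting tail probabilities remain summable, and letting $\theta\downarrow 1$ and $\varepsilon\downarrow 0$ gives $\limsup_{s\to\infty}X(s)/\phi(s)\le\sqrt{R(1,1)}$ a.s.

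For the lower bound, I would take a rapidly growing subsequence $t_n$ (for instance $t_n:=\exp(n^{1+\delta})$ with $\delta>0$ small) and form the orthogonal decomposition $X(t_n)=\hat X_n+Y_n$, where $\hat X_n:=\E[X(t_n)\mid\mathcal{F}_{n-1}]$ for $\mathcal{F}_{n-1}:=\sigma(X(t_k):k<n)$ and $Y_n$ is orthogonal to, hence (by joint Gaussianity) independent of, $\mathcal{F}_{n-1}$. Thus $\{Y_n\}$ is an independent centered Gaussian sequence. The pairwise correlations $\mathrm{Corr}(X(t_m),X(t_n))=O((n^{1+\delta}-m^{1+\delta})^{-\eta})$ supplied by (ii) enter a Gram-matrix estimate that yields $\mathrm{Var}(\hat X_n)=o(t_n^{2\rho})$, so $\mathrm{Var}(Y_n)=t_n^{2\rho}R(1,1)(1+o(1))$. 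Provided $\delta$ is chosen small enough, a Gaussian tail computation gives $\sum_n\P\bigl(Y_n>(1-\varepsilon)\sqrt{R(1,1)}\,\phi(t_n)\bigr)=\infty$, so by independence and the second Borel--Cantelli lemma this event occurs infinitely often. Separately, $\hat X_n$ being Gaussian with variance $o(t_n^{2\rho})$, an elementary Gaussian tail bound plus Borel--Cantelli shows $|\hat X_n|=o(\phi(t_n))$ a.s.; combining yields $\limsup_{s\to\infty}X(s)/\phi(s)\ge(1-\varepsilon)\sqrt{R(1,1)}$, and $\varepsilon\downarrow 0$ gives the matching lower bound.

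The delicate step will be the quantitative control of $\mathrm{Var}(\hat X_n)$: the subsequence $t_n$ and the parameter $\delta$ must be chosen so that, simultaneously, the Borel--Cantelli series for $Y_n$ diverges (pushing $\delta$ small) and the cumulative projection variance driven by the $(\log x)^{-\eta}$ decay in (ii) vanishes (pushing $\delta$ large when $\eta$ is small). Striking this balance, possibly through an iterated or block-subsequence construction adapted to $\eta$, is the technical heart of the argument; without the decay assumption (ii) no such balance is possible and the LIL constant could shift.
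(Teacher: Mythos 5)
Your upper bound is fine and is essentially the paper's argument (Borell--TIS plus self-similarity on the geometric blocks $[s_n,s_{n+1}]$, then $\theta\downarrow 1$). The genuine gap is in your lower bound, and it is not merely a technicality you can defer: the two constraints on $\delta$ that you yourself identify are incompatible in exactly the regime the theorem must cover. To make $\sum_n\P\bigl(Y_n>(1-\varepsilon)\sqrt{R(1,1)}\,\phi(t_n)\bigr)$ diverge with $\mathrm{Var}(Y_n)\sim t_n^{2\rho}R(1,1)$ and $t_n=\exp(n^{1+\delta})$, you need $(1+\delta)(1-\varepsilon)^2\le 1$, i.e.\ $\delta\lesssim 2\varepsilon$, so $\delta$ must shrink to $0$ as you push $\varepsilon\downarrow 0$ to capture the sharp constant. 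On the other hand, assumption (ii) only gives $\mathrm{Corr}(X(t_m),X(t_n))=O\bigl((n^{1+\delta}-m^{1+\delta})^{-\eta}\bigr)$, so adjacent correlations are of order $n^{-\delta\eta}$ and the quantities your Gram-matrix estimate must control behave like $\sum_{m<n}(n^{1+\delta}-m^{1+\delta})^{-2\eta}\asymp n^{1-2\eta-2\delta\eta}$ (and the row sums needed to keep $\lambda_{\min}$ of the correlation matrix away from $0$ behave like $n^{1-\eta-\delta\eta}$). These tend to $0$ only if $\delta$ exceeds a fixed positive threshold of order $(1-\eta)/\eta$ (or $(1-2\eta)/(2\eta)$), which is impossible simultaneously with $\delta\lesssim 2\varepsilon$ whenever $\eta\le 1$. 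Since the theorem assumes only \emph{some} $\eta>0$, your construction as specified fails precisely in the weak-decay regime the statement is designed for; the ``iterated or block-subsequence construction'' you invoke is the missing idea, not a routine refinement.

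The paper sidesteps the need for genuine independence altogether, which is why it works for every $\eta>0$. It keeps the geometric sequence $t_k=\alpha^k$ (so $\P(A_k)\asymp (k\sqrt{\log k})^{-1}$ and $\sum_k\P(A_k)\asymp\sqrt{\log n}$ diverges with no constraint to balance), considers the \emph{increments} $X(t_{k+1})-X(t_k)$ rather than projections onto the full past, and replaces the second Borel--Cantelli lemma by the Erd\H{o}s--R\'enyi version, which only requires the pairwise event correlations to be negligible on average. The needed decorrelation is quantified through the exact identity $\P(Z_\delta\in R_{a,b})-\P(Z_0\in R_{a,b})=\int_0^\delta\psi(t,a,b)\,dt$ for bivariate Gaussian quadrants, combined with $|\delta_j|=O((j\log\alpha)^{-\eta})$ from (ii); this makes the covariance sum $O(\sqrt{\log n})$ against a denominator $\asymp\log n$. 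The price is that the increment events only give $\limsup\ge\sigma(1-\alpha^{-\rho})$, and the sharp constant is recovered by letting $\alpha\to\infty$ at the very end --- a step your scheme does not need but which costs nothing, whereas your scheme's need for full-variance, genuinely independent $Y_n$ is exactly what cannot be financed by a $(\log)^{-\eta}$ correlation decay.
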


\begin{rem}\label{rem:stationary}
In \cite{lamperti1962semi}, Lamperti proved that if the
 process $\{X(t)\}_{t \ge 0}$ is nontrivial, stochastically continuous at
 $t=0$ and there 
 exists a function $v(c)$ such that  
    \begin{equation*}
        R(cs, ct) = v(c) R(s,t),
    \end{equation*}
then it must be of the form $v(c) = c^{2\rho}$ with $\rho \ge 0$.
Under the assumption (i), we have the
 $\rho$-self-similarity, i.e., 
\begin{equation}
\{X(ct)\}_{t \ge 0} \inlaw \{c^{\rho} X(t)\}_{t \ge 0}
 \quad (c>0)
\label{eq:rho-selfsimilar} 
\end{equation}
and $X(0)=0$ almost surely. 
Moreover, if the process $\{X(t)\}_{t\ge 0}$ 
is $\rho$-self-similar, then $\{Y(t)=e^{-\rho
 t}X(e^t)\}_{t \in \R}$ is strictly stationary. 
Conversely, if the process $\{Y(t)\}_{t \in \R}$ 
is strictly stationary, then $\{X(t)=t^{\rho} Y(\log t)
\}_{t > 0}$ with $X(0)=0$ is $\rho$-self-similar (cf. \cite{taqquSamorodnitsky, Maejima}). 
From this correspondence, if we set $r_Y(t) := \E[Y(0)Y(t)]$, then $h(x) = r_Y(\log
 x)$, and the LIL for a self-similar Gaussian process
 $\{X(t)\}_{t \ge 0}$ can be 
rephrased as $\limsup_{t \to \infty} \pm Y(t)/\sqrt{2 \log t} =
 r_Y(0)^{1/2}$ for the associated stationary Gaussian process
 $\{Y(t)\}_{t  \in \mathbb{R}}$. 
Several related works on this form of the limit theorem and on 
the version where the process $\pm Y(t)$ is replaced by 
its maximum process $\sup_{0 \le s \le t} Y(s)$ 
have been extensively studied in the continuous-parameter setting \cite{nisio1967extreme, marcus1972upper, lai1973gaussian, lai1974reproducing, Mangano1976Onstrassen, Carmona1976convergence, arcones1995law}, and also, in the discrete parameter setting 
\cite{berman1964limit, pickands1967maxima, Pickands1969,
 oodaira1971law, rio1995functional}.
\end{rem}

\begin{rem}
Our proof of Theorem~\ref{cLIL main result} relies on a version of the (second) Borel–Cantelli lemma proved by
      Erd\H{o}s and R\'{e}nyi \cite{erdHos1959cantor}, along
      with a decay condition \eqref{h order of convergence}. 
As mentioned in Remark~\ref{rem:stationary}, 
the decay condition (or mixing condition) corresponding to
\eqref{h order of convergence} on
the covariance function of \( \{Y(t)\}_{t \in \mathbb{R}} \)
is $r_Y(t) = O(t^{-\eta})$ as $t \to \infty$,  
under which the Feller's type LIL for stationary processes 
was established in \cite{qualls1971asymptotic}.  
Furthermore, this condition was relaxed to an even weaker mixing condition 
$r_Y(t) = O((\log t)^{-1})$ in \cite{pathak1973law} by using the 
zero-one law for the event $A=\{Y(t)>f(t)\ i.o.\}$,  
where $\{Y(t)\}_{t \in \R}$ is a stationary Gaussian process
      and $f(t)$ is any non-decreasing positive 
function on some time interval $[a,\infty)$ \cite{qualls1972note}. 
The functional LIL, as a natural extension, has also been thoroughly developed for self-similar Gaussian processes 
(cf. \cite{strassen1964invariance, oodaira1972strassen,oodaira1973law, kuelbs1976, taqqu1977law, kono1983, taqqu1985survey}). 
\end{rem}			

Here we give some examples of self-similar Gaussian
processes satisfying two assumptions (i) and (ii) in Theorem~\ref{cLIL main result}.  
\begin{ex}
(a) Suppose that the process $X$ is the fractional Brownian
 motion (FBM) $\{B_H(t)\}_{t \ge 0}$ with 
 the covariance kernel
        \begin{equation*}
            R(s,t)=\frac{1}{2}(|t|^{2H}+|s|^{2H}-|t-s|^{2H}) 
        \end{equation*}
for $0<H<1$. Then it satisfies \eqref{assv} with $\rho=H$,
 and 
\begin{equation*}
h(x)=O(x^{-(1-H)})\text{\quad as $x\to \infty$}, 
\end{equation*}
which implies \eqref{h order of convergence}. 
We note that for $0 < H < 1$, if the process $X= \{X(t)\}_{t \ge 0}$ is 
 an $H$-self-similar Gaussian process with \textit{stationary
 increments}, then it is necessarily the FBM up to a multiplicative constant \cite{taqqu1979self}. 

(b) Let $X(t)=\int_0^t (t-u)^{\beta} u^{-\gamma/2}dB(u)$ for
 $\beta>-1/2$ and $0 \le \gamma<1$.  
This processes with $\gamma=0$ appeared in
 \cite{oodaira1972strassen} as typical examples in the context of the functional LIL,
 while the version with general $\gamma$ were introduced in
 \cite{pang2019nonstationary} 
and is referred to as the generalized Riemann-Liouville
 FBM. For example, $X(t)$ 
reduces to the standard Brownian motion when $\beta=\gamma=0$, and to
 $X(t) = \int_0^t  B(u)du$ when $\beta=1, \gamma=0$. 
In general, the covariance kernel is given by 
        \begin{equation*}
            R(s,t)=\int_0^{s\wedge
	     t}(s-u)^{\beta}(t-u)^{\beta}
	     u^{-\gamma} du,
        \end{equation*}
        which satisfies \eqref{assv} with
 $\rho=\beta-\gamma/2+1/2$, and $h(x)$ satisfies \eqref{h
 order of convergence}. Indeed, 
        \begin{equation*}
            h(x)=O(x^{-\frac{1}{2}(1-\gamma)})\text{\quad as $x\to
	     \infty$}. 
        \end{equation*}

(c) As mentioned in Remark~\ref{rem:stationary}, 
if $\{X(t)\}_{t\ge 0}$ is a $\rho$-self-similar Gaussian process, i.e., it
 satisfies \eqref{assv} with $\rho>0$, 
then there exists a stationary Gaussian process $\{Y(t)\}_{t
 \in \R}$, and we have $\{X(t)=t^{\rho} Y(\log t)
\}_{t > 0}$ with $X(0)=0$. If $\{Y(t)\}_{t
 \in \R}$ is continuous in the mean, by Bochner's theorem, 
there exists a finite measure $\mu$ on $\mathbb{R}$
 such that $r_Y(t):=E[Y(0)Y(t)] = R(1,1) \int_{\R} e^{i\xi t}
 \mu(d\xi)$. Therefore, it is clear from the definition of
 $h(x)$ that 
        \begin{equation*}
            h(x)= R(1,1) \int_{\R} e^{i\xi\log x}\mu(d\xi).
        \end{equation*}
For example, since the probability density $f_{\alpha}(x)$ 
of the symmetric $\alpha$-stable distribution
 corresponding to $\mu_{\alpha}(d\xi)=e^{-|\xi|^{\alpha}}d\xi$ for
 $0< \alpha < 2$ is well known to satisfy $f_{\alpha}(x)
 \sim C_{\alpha}
 |x|^{-(\alpha+1)}$ as $x \to \infty$ (cf. \cite{feller1991introduction}), we see that 
 \begin{equation*}
  h(x)=O((\log x)^{-(\alpha +1)})\text{\quad as
   $x\to \infty$}. 
 \end{equation*}
For $\alpha=2$, $h(x)$ decays even faster, i.e., $h(x) =
 O(e^{-(\log x)^2/4})$ as
   $x\to \infty$. 
\end{ex}

We obtain the asymptotic behavior of the distance between two 
ERWs with different memory parameters by 
applying Theorem~\ref{cLIL main result} together with the
invariance principle for the ERW established by 
Coletti, Gava, and Sch\"{u}tz \cite{coletti2017strong} (see Lemma~\ref{lem3} below). 

\begin{thm}\label{asymptotic behavior the distance with different parameters}
   Let \(\{S_n\}_{n=0}^{\infty}\) and \(\{S'_n\}_{n=0}^{\infty}\) be two independent
 ERWs on $\Z$ with the memory parameters $p$ and
 $p'$, respectively. 
If $0 < p < 3/4$ and $0 < p' < 3/4$, then we have
\begin{equation}
\limsup_{n \to \infty}\pm \frac{S_n - S'_n}{\sqrt{2 n \log
 \log n}} = \sqrt{\frac{1}{3-4p}+\frac{1}{3-4p'}} \quad \text{a.s.} 
\label{eq:asymp_variance}
\end{equation}
\end{thm}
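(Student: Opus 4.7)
My plan is to combine the strong invariance principle of Coletti--Gava--Sch\"utz (Lemma~\ref{lem3}) with Theorem~\ref{cLIL main result}. By Lemma~\ref{lem3}, on a suitably enlarged probability space one can construct, for each ERW separately, a centered continuous $1/2$-self-similar Gaussian process $\{W_p(t)\}_{t\ge 0}$ (respectively $\{W'_{p'}(t)\}_{t\ge 0}$) such that
\[
S_n = W_p(n) + o(\sqrt{n\log\log n}), \qquad S'_n = W'_{p'}(n) + o(\sqrt{n\log\log n}) \quad \text{a.s.}
\]
Since $\{S_n\}$ and $\{S'_n\}$ are independent, the two couplings can be realised jointly in such a way that $W_p$ and $W'_{p'}$ are independent. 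Starting from the martingale identity $\E[S_n \mid \mathcal{F}_m] = (a_n/a_m) S_m$ with $a_n \sim n^{2p-1}/\Gamma(2p)$ and from $\E[S_n^2] \sim n/(3-4p)$, one reads off the asymptotic covariance $\E[S_m S_n] \sim m^{2(1-p)} n^{2p-1}/(3-4p)$ for $m \le n$, so that
\[
R_{W_p}(s,t) \;=\; \frac{(s\wedge t)^{2(1-p)} (s\vee t)^{2p-1}}{3-4p}.
\]

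Now I set $X(t) := W_p(t) - W'_{p'}(t)$. As a difference of independent centered continuous Gaussian processes, $X$ is itself centered, continuous, Gaussian; since each summand satisfies \eqref{assv} with $\rho = 1/2$, so does $X$, with $R_X = R_{W_p} + R_{W'_{p'}}$ and in particular $R_X(1,1) = 1/(3-4p) + 1/(3-4p')$. To apply Theorem~\ref{cLIL main result} I need to check condition~(ii). For $x \ge 1$,
\[
h_{W_p}(x) \;=\; x^{-1/2} R_{W_p}(1,x) \;=\; \frac{x^{2p - 3/2}}{3-4p} \;=\; O\bigl(x^{-(3/2-2p)}\bigr),
\]
and $3/2 - 2p > 0$ precisely because $p < 3/4$. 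The same bound holds for $h_{W'_{p'}}$, hence $h_X = h_{W_p} + h_{W'_{p'}}$ decays polynomially, so \eqref{h order of convergence} holds for \emph{any} $\eta > 0$.

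Theorem~\ref{cLIL main result} applied to $X$ (with $\rho = 1/2$) then gives
\[
\limsup_{t \to \infty} \pm\frac{X(t)}{\sqrt{2 t \log\log t}} \;=\; \sqrt{R_X(1,1)} \;=\; \sqrt{\frac{1}{3-4p} + \frac{1}{3-4p'}} \quad \text{a.s.}
\]
To transfer this to integer times and to $S_n - S'_n$, I combine two facts: first, $(S_n - S'_n) - X(n) = o(\sqrt{n\log\log n})$ a.s.\ by the invariance principle; second, a standard Gaussian modulus-of-continuity estimate (together with a Borel--Cantelli argument) yields $\sup_{n \le t \le n+1} |X(t) - X(n)| = O(\sqrt{\log n})$ a.s., which is negligible against $\sqrt{2 n \log\log n}$. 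Both errors vanish after dividing by $\sqrt{2 n \log\log n}$, so the $\limsup$ along $n \in \mathbb{N}$ coincides with the $\limsup$ along $t \to \infty$, yielding \eqref{eq:asymp_variance}. The main technical obstacle is thus entirely concentrated in ensuring that Lemma~\ref{lem3} delivers a pathwise approximation error of order $o(\sqrt{n \log\log n})$ (so that the LIL constant is preserved); once this is in hand, the verification of (i) and (ii) for $X$ is the routine computation carried out above.
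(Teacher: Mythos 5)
Your proposal is correct and follows essentially the same route as the paper: approximate each walk via Lemma~\ref{lem3}, form the difference Gaussian process, verify conditions (i) and (ii) of Theorem~\ref{cLIL main result} with $\rho=1/2$ and $R_X(1,1)=\tfrac{1}{3-4p}+\tfrac{1}{3-4p'}$, and conclude. The only cosmetic differences are that the paper writes the approximating process explicitly as $t^{2p-1}B(t^{3-4p})/\sqrt{3-4p}$ (whose covariance is exactly your kernel, no asymptotic argument needed) and leaves implicit the continuous-time-to-integer-time transfer that you spell out with a modulus-of-continuity estimate.
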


The following result on infinite collisions of two ERWs with
distinct memory parameters follows from the difference in their speeds, which is
derived from 
Theorems~\ref{asymptotic behavior the distance with same parameters} and \ref{asymptotic
 behavior the distance with different parameters}.

\begin{cor}\label{cor}
Two independent ERWs on $\Z$ with memory parameters $p$ and $p'$ meet each other
 infinitely often almost surely if both parameters satisfy $0 < p \le 3/4$ and $0 < p'
 \le 3/4$; otherwise they meet each other only finitely often almost surely.
\end{cor}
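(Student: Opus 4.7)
The plan is to split the analysis according to whether the regime is diffusive/critical (both parameters in $(0,3/4]$) or whether at least one parameter lies in the superdiffusive regime $(3/4,1]$. The common reduction is the elementary observation that, since $S_n$ and $S'_n$ each have the same parity as $n$, the difference $D_n := S_n - S'_n$ takes values in $2\Z$ with increments $X_{n+1}-X'_{n+1} \in \{-2, 0, 2\}$. Consequently, if $D_n$ takes both arbitrarily large positive and arbitrarily large negative values, then by a discrete intermediate-value argument (the walk can only change sign by hitting $0$) it must vanish infinitely often; conversely, if $|D_n| \to \infty$ almost surely, the walks meet at most finitely many times almost surely.

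For the diffusive/critical regime ($p, p' \in (0, 3/4]$) I would establish $\limsup_n D_n = +\infty$ and $\liminf_n D_n = -\infty$ almost surely. When $p=p'$ this follows directly from Theorem~\ref{asymptotic behavior the distance with same parameters}; when $p \neq p'$ with both in $(0, 3/4)$ it follows from Theorem~\ref{asymptotic behavior the distance with different parameters}. For the mixed critical case (say $p = 3/4$ and $p' < 3/4$), I would apply the invariance principle of Coletti-Gava-Sch\"utz (Lemma~\ref{lem3}) separately to each walk and feed the resulting self-similar Gaussian limits into Theorem~\ref{cLIL main result} to get a single-walk LIL-type lower bound on $\limsup_n |S_n|$ of order $\sqrt{n \log n \log\log\log n}$, which strictly dominates the order $\sqrt{n \log\log n}$ upper bound for $|S'_n|$ obtained in the same way; this forces $D_n$ to oscillate unboundedly in sign.

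For the superdiffusive regime (say $p > 3/4$), the asymptotic \eqref{superdiffusive L} gives $S_n/n^{2p-1} \to L_{p,q}$ almost surely, and since $L_{p,q}$ has a continuous distribution it is nonzero almost surely. I would then show $S'_n = o(n^{2p-1})$ almost surely, treated case by case: if $p' \le 3/4$, this is immediate from the single-walk LIL-type bound described above, since $2p-1 > 1/2$; if $p' > 3/4$ with $p' < p$, apply \eqref{superdiffusive L} to $S'_n$; and if $p' = p$, observe that $(S_n - S'_n)/n^{2p-1}$ converges almost surely to $L_{p,q} - L'_{p,q}$, the difference of two independent copies of $L_{p,q}$, which again has a continuous distribution and hence no atom at $0$. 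In every sub-case $|D_n| \to \infty$ almost surely, yielding only finitely many collisions.

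The main obstacle is the bookkeeping around the critical value $p = 3/4$: the mixed case $p = 3/4$, $p' < 3/4$ is not directly covered by Theorems~\ref{asymptotic behavior the distance with same parameters} or~\ref{asymptotic behavior the distance with different parameters}, and Step~2 requires a single-walk LIL at both sub-critical and critical parameters which is not explicitly stated. I plan to handle these uniformly by transferring Theorem~\ref{cLIL main result} through the invariance principle in Lemma~\ref{lem3}, essentially repeating (with one walk at a time) the argument used to prove Theorem~\ref{asymptotic behavior the distance with different parameters}.
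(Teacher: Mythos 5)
Your overall architecture matches the paper's: both parameters in $(0,3/4]$ is settled by Theorem~\ref{asymptotic behavior the distance with same parameters} and Theorem~\ref{asymptotic behavior the distance with different parameters} (plus the implicit parity/sign-change argument, which the paper also leaves tacit), the mixed critical case $p'<p=3/4$ is settled by a two-sided LIL for the critical walk dominating the subcritical one, and the superdiffusive case is settled by \eqref{superdiffusive L} together with $L_{p,q}\neq 0$ a.s. Your treatment of $p=p'>3/4$ via $(S_n-S'_n)/n^{2p-1}\to L_{p,q}-L'_{p,q}$, which is atomless as a difference of independent continuous random variables, is a nice explicit handling of a sub-case the paper's case list passes over quickly.

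There is, however, one step that fails as written: in the mixed critical case you propose to ``feed the resulting self-similar Gaussian limits into Theorem~\ref{cLIL main result}'' for the walk with $p=3/4$. The Gaussian process produced by Lemma~\ref{lem3}(b) is $X(t)=\sqrt{t}\,B(\log t)$, whose covariance $R(s,t)=\sqrt{st}\,\bigl(\log s\wedge\log t\bigr)$ is \emph{not} homogeneous, i.e.\ it does not satisfy \eqref{assv} for any $\rho$, so Theorem~\ref{cLIL main result} cannot be applied to it directly (the same objection applies to your plan for obtaining the single-walk bound when $p'=3/4$ in the superdiffusive sub-case). The gap is easily repaired: apply the classical LIL for Brownian motion (itself the special case of Theorem~\ref{cLIL main result} with $R(s,t)=s\wedge t$, $\rho=1/2$, $h(x)=x^{-1/2}$) to $B$ and evaluate along $t=\log n$, checking that the $\limsup$ is still attained along this sequence; this yields $\limsup_n \pm\sqrt{n}B(\log n)/\sqrt{2n\log n\log\log\log n}=1$ and hence the desired two-sided bound for $S_n$. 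The paper short-circuits this by simply quoting the known LIL for the critical ERW from \cite{bercu2017martingale}, which is the cleaner route; with that substitution your proof goes through and is essentially the paper's.
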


The remainder of the paper is organized as follows. 
In Section 2, we present a proof of Theorem~\ref{asymptotic
behavior the distance with different parameters} assuming
the LIL result stated in Theorem~\ref{cLIL main result}. In Section 3, we establish upper
and lower bounds needed to prove the LIL result, while
leaving the main ingredient of the proof in Section 4. 
The main arguments required to complete the proof of
Theorem~\ref{asymptotic behavior the distance with different
parameters} and Corollary~\ref{cor}
are carried out in Section 4.

\section{Strong asymptotic distance between two \\ERWs}

First we give a proof of Theorem~\ref{asymptotic behavior the distance with different parameters} assuming
Theorem~\ref{cLIL main result}. To prove
Theorem~\ref{asymptotic behavior the distance with different parameters}, we use the following strong asymptotic
relation between Brownian motion and ERW.

\begin{lem}[Coletti, Gava, and Sch\"{u}tz \cite{coletti2017strong}] \label{lem3}
Let \(\{S_n\}_{n=0}^{\infty}\) be the ERW on $\Z$ with the memory parameter \(p\), and \(\{B(t)\}_{t\ge 0}\) be the standard one-dimensional Brownian motion.
\begin{itemize}
    \item[(a)] If $0 < p < 3/4$ \text{then}
    \begin{equation*}
      \ S_n - \frac{n^{2p-1}}{\sqrt{3-4p}}B(n^{3-4p}) =
       o(\sqrt{n\log\log n}) \quad \text{a.s.} 
\end{equation*}
    \item[(b)] If $p = 3/4$ \text{then}
    \begin{equation*}
      \ S_n - \sqrt{n}B(\log n) = o(\sqrt{n\log
       n\log\log\log n}) \quad \text{a.s.}
    \end{equation*}
\end{itemize}
\end{lem}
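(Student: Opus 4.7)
The plan is to prove both parts by reducing to a strong invariance principle for a suitable martingale associated with $\{S_n\}$. Conditioning on $\mathcal{F}_n = \sigma(X_1,\ldots,X_n)$, the definition of the ERW gives $\E[X_{n+1}\mid\mathcal{F}_n] = (2p-1)S_n/n$, so $\E[S_{n+1}\mid\mathcal{F}_n] = (1+(2p-1)/n)S_n$. Setting
\[
a_n = \prod_{k=1}^{n-1}\left(1+\frac{2p-1}{k}\right) = \frac{\Gamma(n+2p-1)}{\Gamma(n)\Gamma(2p)},
\]
Stirling gives $a_n \sim n^{2p-1}/\Gamma(2p)$, and $M_n := S_n/a_n$ is a square-integrable martingale whose increments $\Delta M_{n+1} = (X_{n+1}-(2p-1)S_n/n)/a_{n+1}$ are bounded in absolute value by $2/a_{n+1}$. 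The conditional variances are
\[
\E[(\Delta M_{n+1})^2\mid\mathcal{F}_n] = \frac{1-(2p-1)^2 S_n^2/n^2}{a_{n+1}^2},
\]
so since $S_n = O(\sqrt{n\log\log n})$ a.s. by the CLT-type bounds already known in the diffusive/critical regimes, the correction is negligible and the quadratic variation satisfies $\langle M\rangle_n \sim \Gamma(2p)^2 n^{3-4p}/(3-4p)$ if $p<3/4$, and $\langle M\rangle_n \sim (\pi/4)\log n$ if $p=3/4$.

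Next I would invoke a martingale strong approximation, for instance the Heyde–Scott / Strassen-type embedding (or the KMT-type martingale result used in \cite{coletti2017strong}). Since the $\Delta M_k$ are bounded and the conditional variances have explicit asymptotics, this produces, on an enlarged probability space, a standard Brownian motion $\{W(t)\}_{t\ge 0}$ with
\[
M_n - W(\langle M\rangle_n) = o(r_n)\quad\text{a.s.},
\]
where the rate $r_n$ is controlled by the moment bounds on the increments and by the explicit growth of $\langle M\rangle_n$; for our boundedness hypothesis it can be taken polynomially smaller than $\langle M\rangle_n^{1/2}$ in case (a), and polynomially smaller than $(\log n)^{1/2}(\log\log\log n)^{1/2}$ in case (b).

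Multiplying by $a_n$ and using Brownian scaling, the approximation $a_n W(\langle M\rangle_n)$ can be rewritten, for a suitable Brownian motion $B$ obtained from $W$ by a deterministic time change, as $(n^{2p-1}/\sqrt{3-4p})B(n^{3-4p})$ in the diffusive regime and as $\sqrt{n}\,B(\log n)$ in the critical regime, matching the lemma. The main obstacle lies in two places: first, showing that the error in the martingale strong approximation, once multiplied back by $a_n$, is $o(\sqrt{n\log\log n})$ in case (a) and $o(\sqrt{n\log n\log\log\log n})$ in case (b), which requires a sufficiently sharp martingale invariance principle and is the step where the critical regime $p=3/4$ is most delicate because $\langle M\rangle_n$ grows only logarithmically; and second, controlling the discrepancy between the random time $\langle M\rangle_n$ and its deterministic equivalent so that replacing $W(\langle M\rangle_n)$ by $W$ evaluated at the deterministic time produces an error of no larger order, which uses Lévy's modulus of continuity together with the a.s.\ concentration $\langle M\rangle_n = \Gamma(2p)^2 n^{3-4p}/(3-4p) + o(r_n^2/\!\log\log\langle M\rangle_n)$, respectively the analogous statement in (b).
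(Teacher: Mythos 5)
This lemma is not proved in the paper at all: it is quoted verbatim from Coletti, Gava and Sch\"utz \cite{coletti2017strong} and used as a black box, so there is no internal proof to compare yours against. Your sketch follows the standard route to that result, and the algebra you do carry out is correct: $\E[X_{n+1}\mid\mathcal F_n]=(2p-1)S_n/n$, the normalization $a_n=\Gamma(n+2p-1)/(\Gamma(n)\Gamma(2p))\sim n^{2p-1}/\Gamma(2p)$, the boundedness of the martingale increments, and the asymptotics $\langle M\rangle_n\sim\Gamma(2p)^2n^{3-4p}/(3-4p)$ (resp.\ $(\pi/4)\log n$), together with the final rescaling that turns $a_nW(\langle M\rangle_n)$ into $\tfrac{n^{2p-1}}{\sqrt{3-4p}}B(n^{3-4p})$ (resp.\ $\sqrt n\,B(\log n)$), all check out.

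The gap is exactly where you place it: the strong approximation step is asserted, not proved, and your description of the required rate is both stronger than necessary and not obviously available off the shelf. What is actually needed after dividing by $a_n$ is $M_n-W(\langle M\rangle_n)=o\bigl((\langle M\rangle_n\log\log\langle M\rangle_n)^{1/2}\bigr)$ in both regimes --- note that $\log\log\langle M\rangle_n\asymp\log\log n$ when $p<3/4$ and $\asymp\log\log\log n$ when $p=3/4$ --- and this is precisely the rate delivered by the Skorokhod embedding (Strassen/Heyde--Scott) for a square-integrable martingale with bounded increments and $\langle M\rangle_n\to\infty$; your claim of an error ``polynomially smaller than $\langle M\rangle_n^{1/2}$'' is not needed and, for a bracket growing only logarithmically as in case (b), is not guaranteed by the classical theorems you name. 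Two further points need care. First, invoking ``$S_n=O(\sqrt{n\log\log n})$ a.s.\ by the CLT-type bounds'' is not legitimate (the CLT gives no a.s.\ bound, and the LIL for $S_n$ is downstream of this lemma); fortunately only $S_n/n\to0$ a.s.\ is needed to make the correction term in $\langle M\rangle_n$ of order $o(\langle M\rangle_n)$, and that follows from the a.s.\ convergence of the bounded-increment martingale argument itself. Second, the replacement of $W(\langle M\rangle_n)$ by $W$ at the deterministic time must be justified by the increment estimates for Brownian motion using $|\langle M\rangle_n-cn^{3-4p}|=o(\langle M\rangle_n)$, which again lands you exactly at the $o((\langle M\rangle_n\log\log\langle M\rangle_n)^{1/2})$ threshold rather than anything smaller. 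With these repairs the outline does reconstruct the cited result, but as written it is a proof plan, not a proof.
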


Now we prove Theorem~\ref{asymptotic behavior the distance
with different parameters} by using Theorem~\ref{cLIL main
result} and Lemma~\ref{lem3}. 

\begin{proof}[Proof of Theorem~\ref{asymptotic behavior the distance with different parameters}]
Suppose that $0 < p < 3/4$ and $0 < p' < 3/4$. It follows form Lemma~\ref{lem3} that
    \begin{equation*}
      \ S_n - \frac{n^{2p-1}}{\sqrt{3-4p}}B(n^{3-4p}) =
       o(\sqrt{n\log\log n}) \quad \textit{a.s.}
    \end{equation*}
and
    \begin{equation*}
      \ S'_n - \frac{n^{2p'-1}}{\sqrt{3-4p'}}B'(n^{3-4p'}) =
       o(\sqrt{n\log\log n}) \quad \textit{a.s.,}
    \end{equation*}
where \(\{B(t)\}_{t \ge 0}\) and \(\{B'(t)\}_{t \ge 0}\) are
 two independent copies of the standard one-dimensional Brownian motion.
Therefore, we have
\begin{equation*}
    S_n - S'_n = \frac{n^{2p-1}}{\sqrt{3-4p}}B(n^{3-4p}) - \frac{n^{2p'-1}}{\sqrt{3-4p'}}B'(n^{3-4p'}) + o(\sqrt{n\log\log n}) \hspace{0.5em} \textit{a.s.}
\end{equation*}
Now we consider the Gaussian process 
\begin{equation*}
X(t) := \frac{t^{2p-1}}{\sqrt{3-4p}}B(t^{3-4p}) -
 \frac{t^{2p'-1}}{\sqrt{3-4p'}}B'(t^{3-4p'}). 
\end{equation*}
Since its covariance kernel is given by 
\[
 R(s,t) = \frac{(st)^{2p-1}}{3-4p} (s\wedge t)^{3-4p}
+ \frac{(st)^{2p'-1}}{3-4p'} (s\wedge t)^{3-4p'}, 
\]
we see that $R(s,t)$ satisfies (\ref{assv}) with $\rho=1/2$, and 
\[
 h(x)=\frac{x^{2p-\frac{3}{2}}}{3-4p}+\frac{x^{2p'-\frac{3}{2}}}{3-4p'}
 = O(x^{-(\frac{3}{2}-2 (p \vee p')}) \quad \text{as $x \to
 \infty$}, 
\]
which implies (\ref{h order of convergence}) for $0< p \vee p' <
 3/4$. Therefore, we obtain \eqref{eq:asymp_variance} from Theorem~\ref{cLIL main result}. 
\end{proof}

\section{Upper and lower bounds for the LIL}

In this section, let $\{X(s)\}_{s \ge 0}$ be a centered continuous Gaussian
 process satisfying (\ref{assv}). Then, we begin by noting that Theorem~\ref{cLIL main result} follows from Propositions~\ref{prop:upperLIL} and 
 \ref{prop:lowerLIL} provided that condition
 \eqref{eq:subseq} below holds. This condition will be verified in the next section. 

\subsection{Upper bound for the LIL}
First we show the upper bound in the LIL,  
which is the simpler part of the proof. 

\begin{prop}\label{prop:upperLIL} 
Let $\{X(s)\}_{s \ge 0}$ be a centered continuous Gaussian
 process satisfying (\ref{assv}). Then, we have  
\[
 \limsup_{s \to \infty}  \frac{X(s)}{\sqrt{2s^{2\rho}  \log \log s}} 
\le \sigma \quad\text{a.s.},
\] 
where $\sigma:=R(1,1)^{1/2}$.
\end{prop}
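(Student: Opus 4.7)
The plan is to exploit self-similarity to reduce the supremum over each geometric block $[s_n,s_{n+1}]$ to a single, fixed random variable, and then combine Borell's Gaussian maximal inequality with the Borel--Cantelli lemma. Fix $\theta>1$, to be taken close to $1$ at the end, and set $s_n:=\theta^n$. The $\rho$-self-similarity \eqref{eq:rho-selfsimilar} gives the distributional identity
\[
\sup_{s\in[s_n,s_{n+1}]} X(s) \inlaw s_n^{\rho}\, M, \qquad M:=\sup_{t\in[1,\theta]} X(t).
\]
Continuity of $X$ on the compact interval $[1,\theta]$ together with Fernique's theorem gives $\E[M]<\infty$, and Borell's inequality yields the Gaussian tail
\[
\P\bigl(M>\E[M]+y\bigr)\le \exp\!\left(-\frac{y^{2}}{2\theta^{2\rho}\sigma^{2}}\right),\qquad y>0,
\]
where the variance parameter is $\sup_{t\in[1,\theta]}\var X(t)=R(\theta,\theta)=\theta^{2\rho}\sigma^{2}$ by \eqref{assv}.

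For any $\epsilon>0$, I would then estimate the probability of
\[
A_n:=\left\{\sup_{s\in[s_n,s_{n+1}]} X(s) > (1+\epsilon)\sigma\sqrt{2 s_n^{2\rho}\log\log s_n}\right\}.
\]
By the distributional identity above, $\P(A_n)=\P\bigl(M>(1+\epsilon)\sigma\sqrt{2\log\log s_n}\bigr)$, and absorbing the constant $\E[M]$ into the exponent (harmless since $\sqrt{\log\log s_n}\to\infty$), Borell's bound gives $\P(A_n)\le C(\log s_n)^{-(1+\epsilon)^{2}/\theta^{2\rho}\,(1+o(1))}$. Because $\log s_n=n\log\theta$, the series $\sum_n\P(A_n)$ converges precisely when $\theta^{2\rho}<(1+\epsilon)^{2}$. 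Choosing $\theta>1$ close enough to $1$ to satisfy this constraint and invoking Borel--Cantelli, the event $A_n$ occurs only finitely often almost surely.

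Since $u\mapsto u^{2\rho}\log\log u$ is eventually non-decreasing, for $s\in[s_n,s_{n+1}]$ on $A_n^{c}$,
\[
\frac{X(s)}{\sqrt{2 s^{2\rho}\log\log s}}\le \frac{\sup_{[s_n,s_{n+1}]} X(s)}{\sqrt{2 s_n^{2\rho}\log\log s_n}} \le (1+\epsilon)\sigma,
\]
and letting $\epsilon\downarrow 0$ completes the proof. The main technical ingredient is the Borell--TIS inequality; the only subtle point is the interplay between the variance factor $\theta^{2\rho}$ from Borell's bound and the safety margin $(1+\epsilon)^{2}$, but since both $\theta$ and $\epsilon$ are at our disposal with only the constraint $\theta^{2\rho}<(1+\epsilon)^{2}$, the balance is always achievable. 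It is worth noting that the decay hypothesis \eqref{h order of convergence} is not needed for the upper bound and will be used only in the matching lower bound.
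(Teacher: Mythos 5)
Your proof is correct and follows essentially the same route as the paper: geometric blocks $\theta^n$, self-similar rescaling of the block supremum, the Borell--TIS bound, and Borel--Cantelli, with the only cosmetic difference that you rescale each block to $[1,\theta]$ (paying the variance factor $\theta^{2\rho}$ up front) while the paper rescales to $[0,1]$ with variance exactly $\sigma^2$ and absorbs the loss into the factor $\alpha^{\rho+1/2}$ at the end.
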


We recall below the well-known Borell–TIS inequality, which provides an estimate on the tail probability of the supremum of a Gaussian process
(cf. \cite[Theorem~4.2]{Nourdin2012}).

\begin{thm}\label{thm:tail}
Let $Y=\{Y(s)\}_{0\le s\le 1}$ be a centered, continuous Gaussian process. 
Set $v := \sup_{0\le s\le 1} \var(Y(s))$. 
Then, $m:= \mathbb{E}\left[\sup_{0\le s\le 1}
 Y(s)\right]$ is finite and we have, for all $x>m$, 
    \begin{equation*}
        \mathbb{P}\left(\sup_{0\le s\le 1} Y(s) \geq x\right)
	 \leq
	 \exp\left(-\frac{(x-m)^2}{2v}\right). 
    \end{equation*}
\end{thm}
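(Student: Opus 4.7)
The plan is to establish Theorem~\ref{thm:tail} via the classical route: reduce to finite-dimensional Gaussian vectors, apply a sharp Gaussian concentration inequality for Lipschitz functions (whose proof rests on the Gaussian isoperimetric inequality of Borell and Sudakov--Tsirelson), and pass to the limit using path continuity. First, I would fix a countable dense subset $D=\{s_j\}_{j\ge 1}\subset[0,1]$ and set $M_N:=\max_{1\le j\le N}Y(s_j)$. By continuity of sample paths, $M_N\uparrow M:=\sup_{0\le s\le 1}Y(s)$ almost surely, so it suffices to prove the inequality at each finite level $N$ with $v_N:=\max_{j\le N}\var(Y(s_j))\le v$ and $m_N:=\mathbb{E}[M_N]$ in place of $v$ and $m$, and then let $N\to\infty$; the distributional limit for $\mathbb{P}(M\ge x)$ follows via monotone convergence of events (approximating $x$ from below by $x-\epsilon$ handles the distinction between $\{M>x\}$ and $\{M\ge x\}$).

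For the finite-dimensional step, I would write $(Y(s_1),\dots,Y(s_N))=A\xi$ where $\xi\sim N(0,I_k)$ and $A$ is an $N\times k$ matrix whose $i$-th row has Euclidean norm $\sqrt{\var(Y(s_i))}\le\sqrt v$. The map $F(\xi):=\max_{i\le N}(A\xi)_i$ is then $\sqrt v$-Lipschitz on $\mathbb{R}^k$, since a maximum of linear forms whose coefficient vectors have norm at most $\sqrt v$ is Lipschitz with the same constant. The heart of the argument is the Gaussian concentration inequality: for every $L$-Lipschitz $F:\mathbb{R}^k\to\mathbb{R}$ and $\xi\sim N(0,I_k)$,
\[
\mathbb{P}\bigl(F(\xi)\ge \mathbb{E}[F(\xi)]+t\bigr)\le \exp\!\Bigl(-\tfrac{t^2}{2L^2}\Bigr)\qquad(t>0).
\]
Applied to our $F$ with $L=\sqrt v$ this gives $\mathbb{P}(M_N\ge m_N+t)\le e^{-t^2/(2v)}$, and substituting $t=x-m_N$ yields the level-$N$ bound for $x>m_N$.

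To establish this concentration inequality with the sharp exponent, I would invoke the Gaussian isoperimetric inequality of Borell and Sudakov--Tsirelson: among Borel sets $A\subset\mathbb{R}^k$ of fixed Gaussian measure $\gamma_k(A)=\Phi(a)$, half-spaces minimise the measure of the Minkowski enlargement $A_r$, so $\gamma_k(A_r)\ge\Phi(a+r)$. Applied to the sub-level set $A=\{F\le\mu\}$, where $\mu$ is the median of $F(\xi)$, and combined with the $L$-Lipschitz inclusion $\{F>\mu+t\}\subset\mathbb{R}^k\setminus A_{t/L}$, this gives $\mathbb{P}(F(\xi)\ge\mu+t)\le 1-\Phi(t/L)\le e^{-t^2/(2L^2)}$. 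The mean-centred version follows since the same one-sided bound (and its symmetric analogue applied to $-F$) yields $|\mathbb{E}[F(\xi)]-\mu|\le L\sqrt{\pi/2}$, so replacing $\mu$ by $\mathbb{E}[F(\xi)]$ costs only a shift that is absorbed into the tail.

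The remaining point is finiteness of $m$. Path continuity on the compact interval ensures $M<\infty$ a.s., so one can choose $a>0$ with $\mathbb{P}(M\le a)>1/2$; since $M_N\le M$, the median of each $M_N$ is at most $a$, and the median form of the concentration inequality then gives $\mathbb{E}[M_N]\le a+\sqrt{\pi v/2}$ uniformly in $N$, whence $m=\sup_N m_N<\infty$ by monotone convergence. The main technical obstacle in this plan is securing the sharp constant $1/2$ in the exponent of the Gaussian concentration inequality: softer interpolation arguments (e.g.\ rotation of independent copies combined with an Ornstein--Uhlenbeck identity) produce only a constant of order $\pi^2/8$, and obtaining the sharp value genuinely requires the full strength of the Gaussian isoperimetric inequality (equivalently, Ehrhard's inequality).
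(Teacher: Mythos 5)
The paper does not actually prove this statement: it is quoted as the classical Borell--TIS inequality with a citation to \cite{Nourdin2012}, so there is no internal proof to compare against. Judged on its own terms, your plan follows the standard route (finite-dimensional reduction, Gaussian concentration for Lipschitz functions, Gaussian isoperimetry), and the reduction steps, the Lipschitz bound $L=\sqrt{v}$ for $F(\xi)=\max_i(A\xi)_i$, the limit passage $M_N\uparrow M$, and the uniform bound on $\mathbb{E}[M_N]$ are all sound. But there is a genuine gap at the one step you wave through: the passage from the median-centered bound to the mean-centered bound with the \emph{same} sharp exponent. Isoperimetry gives $\mathbb{P}(F\ge \mu+t)\le 1-\Phi(t/L)\le \tfrac12 e^{-t^2/(2L^2)}$ with $\mu$ the median. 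If $\mu>\mathbb{E}[F]$, writing $c:=\mu-\mathbb{E}[F]>0$ the substitution gives only $\mathbb{P}(F\ge \mathbb{E}[F]+t)\le \tfrac12 e^{-(t-c)^2/(2L^2)}$, and $\tfrac12 e^{-(t-c)^2/(2L^2)}\le e^{-t^2/(2L^2)}$ forces $c(2t-c)\le 2L^2\log 2$, which fails for all large $t$. So the shift is \emph{not} absorbed into the tail; the bound $|\mathbb{E}[F]-\mu|\le L\sqrt{\pi/2}$ controls the size of the shift but not its sign, and the argument as written does not deliver the stated inequality with $m=\mathbb{E}[\sup Y]$.

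To close the gap you need one of two additional inputs. Either prove the mean-centered inequality directly with the sharp constant --- via the Gaussian logarithmic Sobolev inequality and the Herbst argument, or via the Ibragimov--Sudakov--Tsirelson stochastic-calculus proof (representing $F(B_1)-\mathbb{E}F(B_1)$ as a martingale with quadratic variation at most $L^2$ and applying an exponential supermartingale bound) --- or else show that for your specific $F$ (a maximum of linear forms, hence convex) the mean dominates the median, so that the shift goes in the harmless direction. The latter is true but is itself a consequence of Ehrhard's inequality: convexity of $F$ plus Ehrhard implies $t\mapsto\Phi^{-1}(\mathbb{P}(F\le t))$ is concave, hence $F\inlaw u^{-1}(Z)$ with $u^{-1}$ convex and Jensen gives $\mathbb{E}[F]\ge u^{-1}(0)=\mu$. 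You mention Ehrhard parenthetically at the end, but the proof needs it (or one of the alternatives) explicitly at this point. I note in passing that for the paper's actual use of the theorem in Proposition~\ref{prop:upperLIL}, only the existence of \emph{some} finite centering constant matters, so the median version you do establish would already suffice there; but it does not prove the theorem as stated.
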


\begin{proof}[Proof of Proposition~\ref{prop:upperLIL}] 

By (\ref{assv}) and $\rho$-self-similarity of $\{X(s)\}_{s \ge 0}$, 
it follows from Theorem~\ref{thm:tail} that 
for any $0<a<b$, 
\begin{align}
\P(\max_{a \le s \le b} X(s) > b^{\rho} M) 
&\le \P(\max_{0 \le s \le 1} b^{-\rho} X(bs) > M)
 \nonumber \\
&= \P(\max_{0 \le s \le 1} X(s) > M) \nonumber \\
&\le \exp\Big( -\frac{(M-m)^2}{2 v} \Big),  
\label{Nourdin}
\end{align}
where $m=\E[\sup_{0\le s\le 1} X(s)]$ and 
\begin{equation}
    v = \sup_{0\le t\le 1}
 \var(X(t)) = \sup_{0\le t\le 1} (t^{2\rho} R(1,1))  =\sigma^2. 
\label{eq:v=sigma2}
\end{equation}
Let $t_n=\alpha^n$ for fixed $\alpha>1$ and 
set 
\[
M_n := \sigma \sqrt{2\alpha \log \log t_n} 
= \sigma \sqrt{2\alpha (\log n + \log \log \alpha)}. 
\]
For any $\epsilon>0$, we have 
\begin{equation}
 \frac{(M_n - m)^2}{2\sigma^2} \ge 
 (1-\epsilon) \alpha \log n 
\label{eq:estimate1} 
\end{equation}
for any sufficiently large $n$. 
Setting $b=t_{n+1}$ and $M=M_n$ in \eqref{Nourdin}
 together with \eqref{eq:estimate1} and \eqref{eq:v=sigma2} yields 
\begin{align*}
\P\left( \max_{t_n \le s \le t_{n+1}} X(s) > t_{n+1}^{\rho}
 M_n \right) 
&\le \exp\Big( -\frac{(M_n-m)^2}{2
 \sigma^2} \Big)\\ 
&\le n^{-(1-\epsilon)\alpha}. 
\end{align*}
Hence, if we put $\alpha=(1-2\epsilon)^{-1}$, then 
\[
 \sum_{n=1}^{\infty}
\P\left(\max_{t_n \le s \le t_{n+1}} X(s) > t_{n+1}^{\rho} 
M_n \right) < \infty.  
\]
By the Borel-Cantelli lemma, we have, almost surely, 
\[
\max_{t_n \le s \le t_{n+1}} X(s) \le t_{n+1}^{\rho} M_n 
\quad \text{for all $n \ge n_0$}. 
\]
Therefore, for $t_n\le s\le t_{n+1}$, 
\[
 \frac{X(s)}{\sqrt{2s^{2\rho} \log \log s}}
\le \frac{t_{n+1}^{\rho}
 M_n}{\sqrt{2t_n^{2\rho} \log \log t_n}}
= \alpha^{\rho+\frac{1}{2}}\sigma \quad a.s. 
\]
Since $\alpha>1$ is arbitrary, it follows that 
\[
\limsup_{s \to \infty}  \frac{X(s)}{\sqrt{2s^{2\rho} \log \log s}} 
\le \sigma \quad a.s.
\]
\end{proof}

\subsection{Lower bound for the LIL}

We now proceed to establish the lower bound in the LIL.
\begin{prop}\label{prop:lowerLIL} 
Let $\{X(s)\}_{s \ge 0}$ be a centered continuous Gaussian
 process satisfying (\ref{assv}) and 
\begin{equation}
   \P\left(X(t_{n+1})-X(t_n) > \left(2\gamma_n\log\log
			t_{n+1}\right)^{1/2} \ i.o.\right)=1, 
\label{eq:subseq}
\end{equation}
where $t_n = \alpha^n$ for any sufficiently large $\alpha$ and $\gamma_n = \var(X(t_{n+1}) -
 X(t_n))$. 
Then, we have
\begin{equation}
 \limsup_{s \to \infty}  \frac{X(s)}{\sqrt{2s^{2\rho} \log \log s}} 
\geq \sigma\quad a.s.,  
\label{eq:lowerbound} 
\end{equation}
where $\sigma:=R(1,1)^{1/2}$.
\end{prop}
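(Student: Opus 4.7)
The plan is to reduce the lower bound to a combination of two inputs: the hypothesis \eqref{eq:subseq} on the increments along the geometric subsequence $t_n=\alpha^n$, and the upper bound already proved in Proposition~\ref{prop:upperLIL} applied to control $X(t_n)$ itself. The identity to exploit is simply
\[
X(t_{n+1}) = \bigl(X(t_{n+1})-X(t_n)\bigr) + X(t_n),
\]
so that lower bounds on the increment together with upper bounds on $|X(t_n)|$ give lower bounds on $X(t_{n+1})$, hence on the $\limsup$.

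First I would compute $\gamma_n$ explicitly using the $\rho$-self-similarity \eqref{assv} and the definition $h(x)=x^{-\rho}R(1,x)$. Since $R(t_{n+1},t_n)=t_n^{2\rho}R(\alpha,1)=t_n^{2\rho}\alpha^{\rho}h(\alpha)$ and $R(t_k,t_k)=t_k^{2\rho}\sigma^2$, one obtains
\[
\gamma_n = t_{n+1}^{2\rho}\sigma^2\bigl(1+\alpha^{-2\rho}-2\alpha^{-\rho}h(\alpha)/\sigma^2\bigr).
\]
Thus $\gamma_n/(t_{n+1}^{2\rho}\sigma^2)\to 1$ as $\alpha\to\infty$, and on the infinitely many $n$ supplied by \eqref{eq:subseq}, the increment satisfies
\[
X(t_{n+1})-X(t_n) > \sigma\sqrt{2t_{n+1}^{2\rho}\log\log t_{n+1}}\cdot\sqrt{1+\alpha^{-2\rho}-2\alpha^{-\rho}h(\alpha)/\sigma^2}.
\]

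Next I would apply Proposition~\ref{prop:upperLIL} to both $\{X(s)\}$ and $\{-X(s)\}$ (the latter is again a centered continuous Gaussian process satisfying \eqref{assv} with the same covariance kernel) to deduce that, for every $\varepsilon>0$ and all sufficiently large $n$,
\[
|X(t_n)| \le (1+\varepsilon)\sigma\sqrt{2t_n^{2\rho}\log\log t_n} \qquad a.s.
\]
Dividing by $\sqrt{2t_{n+1}^{2\rho}\log\log t_{n+1}}$, the contribution of $|X(t_n)|$ is at most $(1+\varepsilon)\sigma\alpha^{-\rho}(1+o(1))$.

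Combining these two bounds on the subsequence of $n$ along which \eqref{eq:subseq} holds gives
\[
\limsup_{s\to\infty}\frac{X(s)}{\sqrt{2s^{2\rho}\log\log s}}\ \ge\ \sigma\sqrt{1+\alpha^{-2\rho}-2\alpha^{-\rho}h(\alpha)/\sigma^2}-(1+\varepsilon)\sigma\alpha^{-\rho}\qquad a.s.
\]
Letting $\alpha\to\infty$ (which forces $h(\alpha)\to 0$ by \eqref{h order of convergence}) and then $\varepsilon\to 0$ yields \eqref{eq:lowerbound}. There is no serious obstacle once \eqref{eq:subseq} is in hand: the whole difficulty of the lower bound has been pushed into establishing \eqref{eq:subseq}, which, as the paper indicates, is handled in the next section via a second-Borel–Cantelli-type argument that must overcome the non-independence of the increments $X(t_{n+1})-X(t_n)$ under only the decay condition \eqref{h order of convergence}.
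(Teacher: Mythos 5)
Your proposal is correct and takes essentially the same route as the paper: write $X(t_{n+1})=(X(t_{n+1})-X(t_n))+X(t_n)$, use \eqref{eq:subseq} for the increment, control $X(t_n)$ via Proposition~\ref{prop:upperLIL} (applied to $\pm X$, which you handle a bit more explicitly than the paper does), note $\gamma_n=\sigma^2 t_{n+1}^{2\rho}(1+o_{\alpha}(1))$, and let $\alpha\to\infty$. The only cosmetic difference is that you invoke \eqref{h order of convergence} to make the cross term $\alpha^{-\rho}h(\alpha)$ vanish, whereas the paper obtains this from the Cauchy--Schwarz bound $|h(\alpha)|\le\sigma^2$, so that the proposition needs only \eqref{assv}.
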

\begin{proof}
First we note that 
\begin{align}
\gamma_n &= R(t_{n+1},t_{n+1})-2R(t_{n+1},t_n)+R(t_n,t_n). 
\label{eq:gamma_n}
\end{align}
By the Cauchy–Schwarz inequality and \eqref{assv}, we see that 
\begin{align*}
|R(t_{n+1},t_n)| 
\leq R(t_{n+1},t_{n+1})^{1/2}
 R(t_n,t_n)^{1/2} = \sigma^2 (t_{n+1}t_n)^{\rho}, 
\end{align*}
which, together with
 $t_{n}^{\rho}=\alpha^{-\rho}t_{n+1}^{\rho}=o_{\alpha}(t_{n+1}^{\rho})$, 
implies 
\begin{equation}\label{the order of gamma_n}
    \gamma_n = \sigma^2 t_{n+1}^{2\rho}(1+o_{\alpha}(1)) \quad  \text{as $n \to \infty$}.
\end{equation} 
Dividing both sides of the inequality in \eqref{eq:subseq} by
 $\sqrt{2t_{n+1}^{2\rho}\log\log t_{n+1}}$ yields 
\begin{equation*}
    \frac{X(t_{n+1})}{\sqrt{2t_{n+1}^{2\rho}\log\log
     t_{n+1}}} >
     \left(\frac{\gamma_n}{t_{n+1}^{2\rho}}\right)^{1/2} +
     \frac{X(t_{n})}{\sqrt{2t_{n+1}^{2\rho}\log\log
     t_{n+1}}}. 
\end{equation*}
Since 
\begin{equation*}
   \limsup_{n \to \infty} \frac{X(t_n)}{\sqrt{2t_{n}^{2\rho}
    \log \log t_n}} \leq \sigma \quad a.s. 
\end{equation*}
follows from Proposition~\ref{prop:upperLIL}, by using \eqref{the order of gamma_n}, 
we obtain 
\begin{equation*}
    \limsup_{n \to
     \infty}\frac{X(t_{n+1})}{\sqrt{2t_{n+1}^{2\rho}\log\log
     t_{n+1}}} \geq 
\sigma 
- \frac{\sigma}{\alpha^{\rho}} \quad a.s.
\end{equation*}
Taking the limit $\alpha \to \infty$ yields \eqref{eq:lowerbound}.  
\end{proof}

\section{Proof of \eqref{eq:subseq} via the Erd\H{o}s-R\'{e}nyi's Borel-Cantelli lemma} 

To prove \eqref{eq:subseq}, we use the following version of
the Borel-Cantelli lemma, proved by Erd\H{o}s and R\'{e}nyi. 

\begin{thm}[Erd\H{o}s and R\'{e}nyi \cite{erdHos1959cantor}]\label{thm:weak indep}
Let $A_n$ be a sequence of events such that $\sum_{n=1}^\infty \mathbb{P}\left(A_n\right) = \infty$ and
\begin{equation}\label{cond:weak indep}
  \liminf_{n\to\infty}\frac{\sum_{k=1}^n 
\sum_{l=1}^n\left( \mathbb{P}\left(A_k\cap A_l\right) 
-\mathbb{P}\left(A_k\right)\mathbb{P}\left(A_l\right)\right)}{\left(\sum_{k=1}^n
  \mathbb{P}\left(A_k\right)\right)^2} = 0. 
\end{equation}
Then, $\mathbb{P}\left(A_n\ i.o.\right) = 1$.
\end{thm}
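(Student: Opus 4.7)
The plan is to recognize this as a second-moment-method statement and push it through with Chebyshev's inequality. Let $S_n := \sum_{k=1}^n \mathbf{1}_{A_k}$, so that the event $\{A_n \ i.o.\}$ coincides, up to a null set, with $\{S_\infty = \infty\}$, where $S_\infty := \lim_{n\to\infty} S_n \in [0,\infty]$ exists by monotonicity. The strategy is to produce a subsequence $(n_j)$ along which $S_{n_j}$ is large with probability tending to one, and then conclude that $S_\infty = \infty$ almost surely.

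First I would rewrite the hypothesis in terms of the variance of $S_n$. The elementary identity
\[
\sum_{k=1}^n \sum_{l=1}^n \bigl( \P(A_k \cap A_l) - \P(A_k)\P(A_l) \bigr) = \var(S_n)
\]
converts \eqref{cond:weak indep} into $\liminf_{n\to\infty} \var(S_n)/(\E[S_n])^2 = 0$, while the divergence $\sum \P(A_n) = \infty$ gives $\E[S_n] = \sum_{k=1}^n \P(A_k) \to \infty$.

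Next I would apply Chebyshev's inequality. Since $\{S_n \le \tfrac{1}{2}\E[S_n]\} \subseteq \{|S_n - \E[S_n]| \ge \tfrac{1}{2}\E[S_n]\}$,
\[
\P\!\left( S_n \le \tfrac{1}{2}\E[S_n] \right) \le \frac{4\,\var(S_n)}{(\E[S_n])^2}.
\]
By the liminf hypothesis I may choose a subsequence $(n_j)$ along which the right-hand side tends to zero; combined with $\E[S_{n_j}] \to \infty$ this gives $\P(S_{n_j} \ge K) \to 1$ for every fixed $K>0$. Since $S_n$ is non-decreasing,
\[
\P(S_\infty \ge K) \ge \P(S_{n_j} \ge K) \to 1,
\]
so $\P(S_\infty \ge K) = 1$ for each $K$. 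Intersecting over $K \in \mathbb{N}$ yields $\P(S_\infty = \infty) = 1$, i.e. $\P(A_n \ i.o.)=1$.

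The only delicate point is conceptual rather than technical: the hypothesis supplies second-moment control only along a subsequence, so one cannot expect Chebyshev to pin down $S_n$ at every index. The monotonicity of $S_n$ saves the day — a single good subsequence suffices to force $S_\infty$ to be infinite, which is exactly what the i.o.\ event requires.
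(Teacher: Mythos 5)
Your proof is correct. Note that the paper does not prove this statement at all: it is quoted as a known result of Erd\H{o}s and R\'{e}nyi and used as a black box, so there is no internal proof to compare against; your argument supplies a self-contained one. The key points all check out: the numerator in \eqref{cond:weak indep} is exactly $\var(S_n)$ with $S_n=\sum_{k\le n}\mathbf{1}_{A_k}$, and since it is nonnegative the liminf hypothesis really does yield a subsequence $(n_j)$ with $\var(S_{n_j})/(\E[S_{n_j}])^2\to 0$, while $\sum_n\P(A_n)=\infty$ gives $\E[S_{n_j}]\to\infty$; Chebyshev then gives $\P(S_{n_j}\le \tfrac12\E[S_{n_j}])\to 0$, and monotonicity of $S_n$ turns this into $\P(S_\infty\le K)\le \P(S_{n_j}\le K)\to 0$ for each fixed $K$, hence $\P(S_\infty=\infty)=1$, which is precisely $\P(A_n\ i.o.)=1$. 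It is worth being aware of why this naive second-moment argument suffices here: the hypothesis forces $\limsup_n (\sum_{k\le n}\P(A_k))^2/\sum_{k,l\le n}\P(A_k\cap A_l)=1$, i.e.\ you are in the extreme case of the Kochen--Stone refinement. In the general situation, where that ratio only has a limsup $c<1$, plain Chebyshev at a single scale no longer works and one needs a Paley--Zygmund-type bound applied to increments $S_n-S_m$ (or a zero--one law) to conclude $\P(A_n\ i.o.)\ge c$; your argument quietly uses that $c=1$, which is exactly what the stated hypothesis provides, so the proof is complete as written.
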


We denote $t_n = \alpha^n \ (\alpha >1)$ as in
Proposition~\ref{prop:lowerLIL}, 
\begin{equation}\label{a_n}
a_k:= (2 \log \log t_{k+1})^{1/2}= (2(\log (k+1)
    +\log\log \alpha))^{1/2} 
\end{equation}
and consider the event
\begin{equation}
A_k:= \{X(t_{k+1})-X(t_k) > \gamma^{1/2}_k a_k \}. 
\label{eq:eventAn}
\end{equation} 

First we note that since $\chi_k:=(X(t_{k+1})-X(t_k))/\gamma_k^{1/2}$ is standard
 normal, it follows from the bounds 
\begin{equation*}
        \frac{1}{2}x^{-1} e^{-\frac{x^2}{2}} \leq
	 \int_x^\infty e^{-\frac{y^2}{2}}dy \leq x^{-1}
	 e^{-\frac{x^2}{2}} \quad (x \ge 1)
\end{equation*}
that 
\begin{equation}
 \P(A_k) = \P(\chi_k > a_k) 
= \Theta_{\alpha}\Big( 
\frac{1}{k\sqrt{\log k}}
\Big).
\label{eq:bounds4Ak} 
\end{equation}
Here we write $c_k = \Theta_{\alpha}(d_k)$ to mean that the ratio $c_k
/ d_k$ is bounded above and below uniformly in $k=2,3,\dots$ by positive constants
depending only on $\alpha$. 

Now we estimate the numerator and denominator of
\eqref{cond:weak indep} separately. 
The following estimate for the denominator follows immediately from 
\eqref{eq:bounds4Ak}.  

\begin{lem}\label{lem:lowerLIL:the first half}
Under the settings as above, we have
\begin{equation}\label{2-bc:suminf}
    \sum_{k=1}^n \mathbb{P}\left(A_k\right) 
= \Theta_{\alpha}\big( \sqrt{\log n} \big). 
\end{equation}
\end{lem}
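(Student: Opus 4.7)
The plan is to reduce Lemma~\ref{lem:lowerLIL:the first half} to a routine integral-comparison estimate. Starting from the standard normal tail bound already invoked for \eqref{eq:bounds4Ak}, each individual probability satisfies
\[
 \P(A_k) = \Theta_{\alpha}\!\left(\frac{1}{k\sqrt{\log k}}\right),
\]
so the asserted two-sided bound \eqref{2-bc:suminf} will follow once I control the partial sums of $1/(k\sqrt{\log k})$. Since the constants in $\Theta_{\alpha}$ depend only on $\alpha$ (through the additive shift $\log\log\alpha$ in \eqref{a_n} and the constant $\sigma$), they pull through the summation intact, and the statement reduces to a purely deterministic estimate.

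First I would dispose of the first few terms where $\log k$ is small or zero (for $k=1$ the quantity $\log k$ vanishes, so the sum should be taken from $k=2$ onward, which only changes the constants in $\Theta_{\alpha}$). Then, since the function $x \mapsto 1/(x\sqrt{\log x})$ is positive and decreasing on $[e, \infty)$, I would compare the sum with the integral
\[
 \int_{2}^{n} \frac{dx}{x\sqrt{\log x}} = 2\sqrt{\log x}\,\Big|_{2}^{n} = 2\sqrt{\log n} - 2\sqrt{\log 2},
\]
whose antiderivative is elementary. Standard monotone sum/integral comparison then yields
\[
 \sum_{k=2}^{n} \frac{1}{k\sqrt{\log k}} = 2\sqrt{\log n} + O(1),
\]
and combining this with the pointwise estimate on $\P(A_k)$ gives \eqref{2-bc:suminf}.

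There is no real obstacle here; the only minor care is to keep the $\alpha$-dependence of the constants explicit, which is automatic since all the $\alpha$-dependence has been absorbed into the $\Theta_{\alpha}$ notation at the level of each summand $\P(A_k)$, and the integral-comparison step involves only absolute constants. The main use of this lemma will come in the next subsection, when the same estimate is squared to form the denominator of \eqref{cond:weak indep}, and compared against the (more delicate) covariance-type bound for the numerator.
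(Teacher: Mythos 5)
Your proposal is correct and follows essentially the same route as the paper: the paper's proof simply sums the two-sided bound \eqref{eq:bounds4Ak}, leaving the comparison $\sum_{k=2}^{n} 1/(k\sqrt{\log k}) \asymp \sqrt{\log n}$ implicit, which you have merely spelled out via the elementary integral $\int_2^n \frac{dx}{x\sqrt{\log x}} = 2\sqrt{\log n} + O(1)$. Your handling of the $k=1$ term and of the $\alpha$-dependence of the constants is also consistent with the paper's conventions.
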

\begin{proof}
By summing both sides of \eqref{eq:bounds4Ak}, we obtain the
inequality \eqref{2-bc:suminf}. 
\end{proof}

Next we estimate the numerator. 
Let $R_{a,b} := [a,\infty) \times [b,\infty)$ and 
$Z_{\delta} \sim N(0,I_{\delta})$, where $I_{\delta}=\bigl(
\begin{smallmatrix}
   1 & \delta \\
   \delta & 1
\end{smallmatrix}
\bigl)$ is the covariance matrix of a bivariate normal distribution. We observe that 
\begin{align*}
&\P(A_k \cap A_l) - \P(A_k) \P(A_l)\\
&= \P(\{\chi_k > a_k\} \cap \{\chi_l > a_l\})
- \P(\chi_k >
 a_k)\P(\chi_l > a_l) \\
&= \P(Z_{\delta_{k,l}} \in R_{a_k,a_l}) - \P(Z_0 \in
 R_{a_k,a_l}), 
\end{align*} 
where $\delta_{k,l}$ denotes the covariance 
between $\chi_k$ and $\chi_l$, or equivalently, the
correlation coefficient between the increments
$X(t_{k+1})-X(t_k)$ and $X(t_{l+1})-X(t_l)$. 
To proceed, we require an estimate for the tail probability of
the bivariate normal vector $Z_{\delta}$ over the quadrant $R_{a,b}$. 
In this context, we have the following remarkable formula for the tail distribution
function. 
\begin{prop} 
Let us define 
\[
\psi(\delta,x,y) := \frac{1}{2\pi\sqrt{1-\delta^2}}  
\exp\left(-\frac{x^2 - 2 \delta x y +
 y^2}{2(1-\delta^2)}\right),  
\]
which is the probability density function of the bivariate normal
random variable $Z_{\delta} \sim N(0,I_{\delta})$. 
For $a, b \in \R$ and $|\delta|<1$, we have 
\begin{align*}
\phi(\delta,a,b) := \P(Z_{\delta} \in R_{a,b}) - \P(Z_0 \in R_{a,b})
= \int_0^{\delta} \psi(t,a,b) dt. 
\end{align*}
\end{prop}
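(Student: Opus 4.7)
My plan is to compute $\frac{d}{d\delta}\P(Z_\delta\in R_{a,b})$ by differentiating under the integral sign, identify its integrand with a mixed second-order partial derivative in $(x,y)$, and then collapse the resulting double integral via the fundamental theorem of calculus.

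Setting
$$F(\delta):=\P(Z_\delta\in R_{a,b})=\int_a^\infty\int_b^\infty \psi(\delta,x,y)\,dy\,dx,$$
I would first justify differentiation under the integral sign on each compact subset $|\delta|\le \delta_0<1$. The Gaussian-type decay of $\psi$ and of $\partial_\delta\psi$ in $(x,y)$, uniform in $\delta\in[-\delta_0,\delta_0]$, provides an integrable envelope, so $F\in C^1(-1,1)$ and $F'(\delta)=\int_a^\infty\int_b^\infty \partial_\delta\psi(\delta,x,y)\,dy\,dx$.

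The heart of the argument is the identity
$$\frac{\partial \psi}{\partial \delta}(\delta,x,y)=\frac{\partial^2 \psi}{\partial x\,\partial y}(\delta,x,y),$$
a classical Kolmogorov-type equation for the bivariate Gaussian density. I would verify it by direct computation from $\log\psi=-\log(2\pi)-\tfrac12\log(1-\delta^2)-\frac{x^2-2\delta xy+y^2}{2(1-\delta^2)}$: both $\partial_\delta\log\psi$ and $\partial_x\log\psi\cdot\partial_y\log\psi+\partial_x\partial_y\log\psi$ reduce to $\frac{(x-\delta y)(y-\delta x)}{(1-\delta^2)^2}+\frac{\delta}{1-\delta^2}$, from which the PDE follows upon multiplying by $\psi$. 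This algebraic verification is the only nontrivial step; it is routine but the main potential pitfall if one is not careful with signs.

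Once the identity is in hand, the double integral telescopes. Applying the fundamental theorem of calculus first in $y$ and then in $x$, and using that $\psi$ together with $\partial_x\psi$ vanish as their arguments tend to $+\infty$, we get
$$F'(\delta)=\int_a^\infty\bigl[\partial_x\psi(\delta,x,y)\bigr]_{y=b}^{y=\infty}dx=-\int_a^\infty \partial_x\psi(\delta,x,b)\,dx=\psi(\delta,a,b).$$
Integrating from $0$ to $\delta$ then yields $\phi(\delta,a,b)=F(\delta)-F(0)=\int_0^\delta\psi(t,a,b)\,dt$, which is the claim.
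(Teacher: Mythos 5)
Your proposal is correct and follows essentially the same route as the paper: verify the identity $\partial_\delta \psi = \partial_x\partial_y\psi$, differentiate $\P(Z_\delta\in R_{a,b})$ under the integral sign, collapse the double integral by the fundamental theorem of calculus to get $\partial_\delta\phi(\delta,a,b)=\psi(\delta,a,b)$, and integrate from $0$ to $\delta$ using $\phi(0,a,b)=0$. Your write-up merely makes explicit the dominated-convergence justification and the vanishing boundary terms, which the paper leaves implicit.
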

\begin{proof} 
It is easy to verify that 
\begin{equation}
 \partial_{\delta} \psi(\delta,x,y) 
= \partial_x \partial_y \psi(\delta,x,y). 
\label{eq:diff_identity} 
\end{equation}
Differentiating $\phi(\delta,a,b)$ with respect to $\delta$ 
and using identity \eqref{eq:diff_identity}, we obtain 
\begin{align*}
 \partial_{\delta} \phi(\delta,a,b)
&= \int_a^{\infty} dx 
\int_b^{\infty} dy 
 \partial_{\delta} \psi(\delta,x,y)\\ 
&= \int_a^{\infty} dx 
\int_b^{\infty} dy 
\partial_x \partial_y \psi(\delta,x,y) \\
&= \psi(\delta,a,b).  
\end{align*}
Since $\phi(0,a,b)=0$, integrating both sides with respect
 to $\delta$ yields the desired formula.  
\end{proof}

From this proposition, we have the following estimate for
$\phi(\delta,a,b)$.

\begin{lem}\label{lem:upper bound of the difference}
For $a,b>0$ and $|\delta|<1$,
\begin{align*}
|\phi(\delta,a,b)|\le
\frac{1}{2\pi} \frac{1}{\sqrt{1-\delta^2}}\exp\left(-\frac{a^2 + b^2}{2}\right) 
  \exp\left(\frac{|\delta| a b}{1-\delta^2}\right)|\delta|.
\end{align*} 
\end{lem}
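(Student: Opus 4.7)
The plan is to start from the integral representation established in the preceding proposition,
\[
\phi(\delta,a,b)=\int_{0}^{\delta}\psi(t,a,b)\,dt,
\]
and reduce the bound to a sup-estimate on the integrand, multiplied by the length $|\delta|$ of the interval of integration. The target bound has three factors apart from $|\delta|$ itself — namely $(2\pi\sqrt{1-\delta^{2}})^{-1}$, $\exp(-(a^{2}+b^{2})/2)$, and $\exp(|\delta|ab/(1-\delta^{2}))$ — so the strategy is to isolate each of these from $\psi(t,a,b)$ by a pointwise inequality that is valid for every $t$ in the interval joining $0$ and $\delta$.

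The key algebraic step is to rewrite the Gaussian exponent in $\psi$ as
\[
-\frac{a^{2}-2tab+b^{2}}{2(1-t^{2})}
=-\frac{a^{2}+b^{2}}{2(1-t^{2})}+\frac{tab}{1-t^{2}},
\]
and then use $1/(1-t^{2})\ge 1$ to drop the $t$-dependence of the $-(a^{2}+b^{2})$ factor, producing
\[
\psi(t,a,b)\le\frac{1}{2\pi\sqrt{1-t^{2}}}\exp\!\left(-\frac{a^{2}+b^{2}}{2}\right)\exp\!\left(\frac{tab}{1-t^{2}}\right).
\]
This already exhibits the desired $\exp(-(a^{2}+b^{2})/2)$ factor.

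The remaining task is to take a supremum of the right-hand side over $t$ on the interval between $0$ and $\delta$. Since $a,b>0$, the map $t\mapsto 1/\sqrt{1-t^{2}}$ is increasing in $|t|$, and $t\mapsto tab/(1-t^{2})$ is increasing in $t$, so in the case $\delta\ge 0$ the supremum is attained at $t=\delta$ and yields exactly the claimed bound (without absolute values). For $\delta<0$, the integration interval is $[\delta,0]$, on which $tab\le 0$ so that $\exp(tab/(1-t^{2}))\le 1$; consequently the right-hand side is bounded by $(2\pi\sqrt{1-\delta^{2}})^{-1}\exp(-(a^{2}+b^{2})/2)$, and we may freely insert the factor $\exp(|\delta|ab/(1-\delta^{2}))\ge 1$ to unify the two cases. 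Multiplying the common sup-bound by $|\delta|$ gives the statement of the lemma.

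I do not foresee any substantive obstacle; this is essentially a one-line estimate once the exponent is rearranged. The only point requiring modest care is the sign of $\delta$ and the insertion of the harmless factor $\exp(|\delta|ab/(1-\delta^{2}))\ge 1$ to give a single expression covering both signs.
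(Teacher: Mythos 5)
Your proof is correct and follows essentially the same route as the paper: start from the integral representation $\phi(\delta,a,b)=\int_0^{\delta}\psi(t,a,b)\,dt$, split the exponent as $-\tfrac{a^2+b^2}{2(1-t^2)}+\tfrac{tab}{1-t^2}$, use $1/(1-t^2)\ge 1$ to extract $\exp(-(a^2+b^2)/2)$, and bound the remaining integrand uniformly on the interval of integration before multiplying by $|\delta|$. Your explicit treatment of the case $\delta<0$ is a small point the paper leaves implicit, but the argument is the same.
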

\begin{proof}
Now we estimate the following: 
\[
 \phi(\delta,a,b) 
= \int_0^{\delta} \frac{1}{2\pi \sqrt{1-t^2}}  
\exp\left(-\frac{a^2 - 2 t a b +
 b^2}{2(1-t^2)}\right) dt.  
\]
For $|\delta|< 1$, we obtain
\begin{align*}
|\phi(\delta,a,b)|
&\le 
\frac{1}{2\pi} \frac{1}{\sqrt{1-\delta^2}}  
\exp\left(-\frac{a^2 + b^2}{2}\right) 
\left| \int_0^{\delta} 
\exp\left(\frac{t a b}{1-t^2}\right) dt
\right|, 
\end{align*}
from which the desired inequality follows. 
\end{proof}

\begin{lem}
Let $\{X(s)\}_{s \ge 0}$ be a centered continuous Gaussian
 process satisfying \eqref{assv}, and set 
\begin{equation*}
\gamma_{k,l} 
= \cov(X(t_{k+1})-X(t_k), X(t_{l+1})-X(t_l)), 
\end{equation*}
and $\gamma_k=\gamma_{k,k}$, which is the same as in \eqref{eq:gamma_n}. 
Then, 
\begin{equation}\label{eq:delta=ratioofL}
    \delta_{k,l} :=
     \frac{\gamma_{k,l}}{\gamma_k^{\frac{1}{2}}
     \gamma_l^{\frac{1}{2}}}
= \frac{L_{|k-l|}(\alpha)}{L_0(\alpha)},  
\end{equation}
where 
$L_j(\alpha) 
:= h(\alpha^j) -\alpha^{-\rho} \{h(\alpha^{j+1})
+ h(\alpha^{|j-1|})\}
+ \alpha^{-2\rho} h(\alpha^j)$. 
\end{lem}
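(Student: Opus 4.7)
The plan is a direct computation that reduces everything to the function $h$ via the self-similarity assumption. First I would use \eqref{assv} with $c = 1/s$ for $0 < s \le t$ to obtain $R(s,t) = s^{2\rho} R(1, t/s) = (st)^{\rho} h(t/s)$, and by the symmetry of $R$ conclude that
\begin{equation*}
R(s,t) = (st)^{\rho}\, h\bigl(\max(s,t)/\min(s,t)\bigr)
\end{equation*}
for all $s,t > 0$. This is the only structural input needed beyond the bilinearity of the covariance.

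Next I would expand
\begin{equation*}
\gamma_{k,l} = R(t_{k+1}, t_{l+1}) - R(t_{k+1}, t_l) - R(t_k, t_{l+1}) + R(t_k, t_l)
\end{equation*}
by substituting $t_n = \alpha^n$ and the formula above. Assuming $k \le l$ and writing $j := l-k$, each of the four terms becomes a monomial in $\alpha^{\rho}$ times $h(\alpha^{\text{something}})$. Factoring out the common $\alpha^{\rho(k+l+2)}$, the remaining bracket collapses to
\begin{equation*}
h(\alpha^j) - \alpha^{-\rho}\bigl\{ h(\alpha^{j+1}) + h(\alpha^{|j-1|}) \bigr\} + \alpha^{-2\rho} h(\alpha^j),
\end{equation*}
which is precisely $L_j(\alpha)$. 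For $k>l$, the symmetry of $\gamma_{k,l}$ in its arguments reduces to the previous case, so in all cases $\gamma_{k,l} = \alpha^{\rho(k+l+2)} L_{|k-l|}(\alpha)$.

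Specializing to $k=l$ (equivalently, $j=0$) gives $\gamma_k = \alpha^{2\rho(k+1)} L_0(\alpha)$, and therefore $\gamma_k^{1/2}\gamma_l^{1/2} = \alpha^{\rho(k+l+2)} L_0(\alpha)$. The $\alpha$-powers cancel upon taking the ratio, and \eqref{eq:delta=ratioofL} follows immediately. There is no serious obstacle in the argument; the only point that requires a moment of care is the boundary case $j=0$, where the middle term $R(t_{k+1},t_k)$ has its larger argument first, so the ratio feeding $h$ is $\alpha$ rather than $\alpha^{-1}$. This produces $h(\alpha^{1}) = h(\alpha^{|j-1|})$ in place of the naive $h(\alpha^{j-1})$, which is exactly why the absolute value appears in the definition of $L_j(\alpha)$ stated in the lemma.
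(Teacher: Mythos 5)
Your proposal is correct and follows essentially the same route as the paper: express $R(s,t)=(st)^{\rho}h(\max(s,t)/\min(s,t))$ via \eqref{assv}, expand $\gamma_{k,l}$ into the four covariance terms, factor out $\alpha^{\rho(k+l+2)}$ to identify $L_{|k-l|}(\alpha)$, and take the ratio. Your explicit treatment of the $j=0$ case explaining the $|j-1|$ in $L_j(\alpha)$ is a correct elaboration of a detail the paper leaves implicit.
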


\begin{proof} 
Since $R(x,y) = (xy)^{\rho} h(xy^{-1})$ when $x \ge y$, 
we easily see from \eqref{assv} that 
\begin{align*}
 \gamma_{k,l} 
&= R(t_{k+1}, t_{l+1}) - R(t_{k+1}, t_{l}) -
 R(t_{k}, t_{l+1}) + R(t_{k}, t_{l}) \\
&= \alpha^{\rho(k+l+2)} L_{|k-l|}(\alpha), 
\end{align*}
from which \eqref{eq:delta=ratioofL} immediately follows. 
\end{proof}

In the following lemma, let $a_k$ be defined as in
\eqref{a_n}, and for simplicity, 
we write $\delta_j$ to denote $\delta_{k,l}$ when $|k-l|=j$. 

\begin{lem}\label{between |k-l| and C}
Suppose that \eqref{assv} and \eqref{h order of convergence}
 hold. 
Then, for $0<\epsilon<\frac{1}{2}(\eta \wedge
 1)$ and sufficiently large $\alpha$, there exist $C_{\eta, \alpha}>0$ and
 $M_{\eta,\epsilon,\alpha}>0$ such that 
\begin{equation*}
  |\phi(\delta_j, a_k,a_{k+j}) | \le C_{\eta, \alpha}\frac{1}{k^{1+\epsilon}}\frac{1}{j^{1+\eta-2\epsilon}}
\end{equation*}
for any $j\ge M_{\eta,\epsilon,\alpha}$. 
\end{lem}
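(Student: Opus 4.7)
My plan is to apply Lemma~\ref{lem:upper bound of the difference} with $a = a_k$, $b = a_{k+j}$, $\delta = \delta_j$, combine the two exponential factors via AM--GM, and then convert the resulting exponential into a polynomial expression in $k$ and $j$ using the explicit form of $a_k$. The extra $j$-decay needed in the final bound comes from the smallness of $|\delta_j|$, which is in turn controlled by the decay hypothesis \eqref{h order of convergence}.

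First I would establish $u := |\delta_j| \le C_\alpha\, j^{-\eta}$ for $\alpha$ large and $j \ge 2$. For $j \ge 1$ one has $L_j(\alpha) = (1+\alpha^{-2\rho}) h(\alpha^j) - \alpha^{-\rho}(h(\alpha^{j+1}) + h(\alpha^{j-1}))$, and \eqref{h order of convergence} gives $|h(\alpha^m)| \le C_\eta (m\log\alpha)^{-\eta}$ for $m \ge 1$, so $|L_j(\alpha)| \le C_{\eta,\alpha}\, j^{-\eta}$ for $j \ge 2$ (using $j - 1 \ge j/2$). Meanwhile $L_0(\alpha) = (1+\alpha^{-2\rho})\sigma^2 - 2\alpha^{-\rho}h(\alpha) \to \sigma^2$ as $\alpha \to \infty$, so $L_0(\alpha) \ge \sigma^2/2$ for $\alpha$ sufficiently large, giving the claim.

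Next, using $2 a_k a_{k+j} \le a_k^2 + a_{k+j}^2$ in Lemma~\ref{lem:upper bound of the difference}, the exponent becomes $-\bigl(1 - u/(1-u^2)\bigr)(a_k^2 + a_{k+j}^2)/2$. Since $u/(1-u^2) \le 2u$ for $u \le 1/2$, this is at most $-(1-2u)(a_k^2 + a_{k+j}^2)/2$. Substituting $a_k^2 = 2\log(k+1) + 2\log\log\alpha$ and exponentiating yields
\begin{equation*}
|\phi(\delta_j, a_k, a_{k+j})| \le C_\alpha\, u \cdot \frac{(k+1)^{2u}(k+j+1)^{2u}}{(k+1)(k+j+1)}.
\end{equation*}
Using the elementary inequality $k+j+1 \le (k+1)(j+1)$, the numerator is bounded by $(k+1)^{4u}(j+1)^{2u}$.

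The main obstacle, and the heart of the argument, is absorbing the factor $(k+1)^{4u}$ uniformly in $k$. The key observation is that $u \le C_\alpha j^{-\eta} \to 0$, so for $j \ge M_{\eta,\epsilon,\alpha}$ sufficiently large we can simultaneously ensure $4u \le \epsilon$, giving $(k+1)^{4u} \le 2 k^\epsilon$ for $k \ge 1$, and $(j+1)^{2u} \le 2$, since $u \log j \le C_\alpha j^{-\eta}\log j \to 0$. Combined with $u \le C_\alpha j^{-\eta}$, the bound reduces to
\begin{equation*}
|\phi(\delta_j, a_k, a_{k+j})| \le C_{\eta,\alpha}\, \frac{j^{-\eta}\, k^\epsilon}{k(k+j)},
\end{equation*}
and a short case split---either $j \le k$, giving $k^{2\epsilon} j^{1-2\epsilon} \le k \le k+j$, or $j > k$, giving $k^{2\epsilon} j^{1-2\epsilon} \le j \le k+j$---yields the claimed bound $|\phi(\delta_j, a_k, a_{k+j})| \le C_{\eta,\alpha}/(k^{1+\epsilon} j^{1+\eta-2\epsilon})$.
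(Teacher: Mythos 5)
Your proposal is correct and follows essentially the same route as the paper: bound $|\delta_j|$ by $C_{\eta,\alpha}j^{-\eta}$ via the $L_j(\alpha)/L_0(\alpha)$ representation and \eqref{h order of convergence}, feed $a=a_k$, $b=a_{k+j}$ into Lemma~\ref{lem:upper bound of the difference}, absorb the exponential cross term into a small power of the indices once $j$ is large, and finish with the same elementary comparison $k^{2\epsilon}j^{1-2\epsilon}\le k+j$. The only difference is cosmetic bookkeeping (you use $2a_ka_{k+j}\le a_k^2+a_{k+j}^2$ and $k+j+1\le(k+1)(j+1)$, whereas the paper simply uses $a_ka_{k+j}\le a_{k+j}^2$ to get the factor $(k+j+1)^{\epsilon}$), so there is nothing to add.
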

\begin{proof}
From \eqref{h order of convergence},
 (\ref{eq:delta=ratioofL}), 
and the definition of $L_j(\alpha)$, 
it follows that $L_0(\alpha) \to h(1) > 0$ as $\alpha \to \infty$
 and for any sufficiently large $\alpha$, 
\begin{equation}\label{delta from ass2}
  |\delta_j|=\left|\frac{L_{j}(\alpha)}{L_{0}(\alpha)}\right|
\le c_{\eta} (j \log \alpha)^{-\eta} \quad (j \ge 2). 
\end{equation}
It follows from Lemma~\ref{lem:upper bound of the difference} that
\begin{align}\label{phi delta estimate}
 |\phi(\delta_j, a_k, a_{k+j}) |&\le C_{\eta, \alpha}\frac{1}{\sqrt{1-\delta_{j}^2}}\frac{1}{k+1}\frac{1}{k+j+1}  
\exp\left(\frac{|\delta_j| a_k
 a_{k+j}}{1-\delta_j^2}\right)j^{-\eta}.
\end{align}
For any fixed $\epsilon$ with 
$0<\epsilon<\frac{1}{2}(\eta \wedge 1)$, by (\ref{delta from ass2}), we
 have $\frac{1}{\sqrt{1-\delta_{j}^2}}\le\frac{1}{2}$ and $|\delta_j|\le \epsilon$
for sufficiently large $j$. Thus, we see that 
\begin{align*}
    \frac{|\delta_j|}{1-\delta_j^2}a^2_{k+j}&=
 \frac{|\delta_j|}{1-\delta_j^2}\times 2(\log (k+j+1)+\log
 \log \alpha) \notag \\ 
    &\le \epsilon\log (k+j+1) 
\end{align*}
holds 
for any $j\ge M_{\eta, \epsilon,\alpha}$, 
and hence, 
\begin{align*}
      \exp\left(\frac{|\delta_j| a_k
 a_{k+j}}{1-\delta_j^2}\right)&\le
 \exp\left(\frac{|\delta_j|
 a^2_{k+j}}{1-\delta_j^2}\right) 
   \le (k+j+1)^{\epsilon}. 
\end{align*}
It follows from (\ref{phi delta estimate}) that 
\begin{align*}
|\phi(\delta_j, a_k,a_{k+j}) | 
&\le C_{\eta, \alpha}\frac{1}{k+1}\frac{1}{k+j+1}  
(k+j+1)^{\epsilon}j^{-\eta}\\ 
&\le C_{\eta, \alpha}\frac{1}{k^{1+\epsilon}}\frac{1}{j^{1+\eta-2\epsilon}}.
\end{align*} 
This completes the proof. 
\end{proof}

\begin{lem}\label{lem:lowerLIL:the latter part}
Suppose that \eqref{assv} and \eqref{h order of convergence} hold. Then, 
\begin{equation}\label{B-numerator}
  \sum_{k=1}^n \sum_{l=1}^n\left( \mathbb{P}\left(A_k\cap A_l\right)-\mathbb{P}\left(A_k\right)\mathbb{P}\left(A_l\right)\right) = O(\sqrt{\log n})\text{\quad as $n\to \infty$}.
   \end{equation}
\end{lem}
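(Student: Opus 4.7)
The plan is to write each summand as $\mathbb{P}(A_k \cap A_l) - \mathbb{P}(A_k)\mathbb{P}(A_l) = \phi(\delta_{|k-l|}, a_k, a_l)$ and decompose the double sum according to the diagonal distance $j := |k-l|$ into three regimes: the diagonal ($j=0$), the near-diagonal ($1 \le j < M$) with $M = M_{\eta,\epsilon,\alpha}$ as in Lemma~\ref{between |k-l| and C}, and the off-diagonal ($j \ge M$). I bound each regime separately.

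For the diagonal, I use the trivial inequality $\mathbb{P}(A_k)(1-\mathbb{P}(A_k)) \le \mathbb{P}(A_k)$; Lemma~\ref{lem:lowerLIL:the first half} then immediately gives an $O(\sqrt{\log n})$ contribution. For the off-diagonal regime, Lemma~\ref{between |k-l| and C} supplies the key estimate
\[
|\phi(\delta_j, a_k, a_{k+j})| \le C_{\eta,\alpha}\, k^{-(1+\epsilon)} j^{-(1+\eta - 2\epsilon)};
\]
since $\epsilon > 0$ and $\eta - 2\epsilon > 0$, the product $\sum_{k \ge 1} k^{-(1+\epsilon)} \sum_{j \ge M} j^{-(1+\eta-2\epsilon)}$ converges, yielding an $O(1)$ contribution after using the symmetry of $\phi$ in its last two arguments to handle both orderings $k < l$ and $k > l$.

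The delicate step is the near-diagonal regime $1 \le j < M$: here $|\delta_j|$ need not be small, so the proof of Lemma~\ref{between |k-l| and C} does not apply and a direct attempt at the same estimate could blow up via the factor $\exp(|\delta_j| a_k a_{k+j}/(1-\delta_j^2))$. I would bypass this by the elementary estimate
\[
|\mathbb{P}(A_k \cap A_l) - \mathbb{P}(A_k)\mathbb{P}(A_l)| \le 2 \mathbb{P}(A_k),
\]
valid for any pair of events. Since for each $k$ there are at most $2(M-1)$ indices $l$ with $1 \le |k-l| < M$, this contribution is at most $4(M-1) \sum_{k=1}^n \mathbb{P}(A_k) = O(\sqrt{\log n})$; the factor $M$ is harmless since it depends only on $\eta, \epsilon, \alpha$. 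Adding the three pieces yields the claimed $O(\sqrt{\log n})$. The main obstacle is precisely this near-diagonal regime, and the point is that only $O(1)$ indices $l$ per row can exhibit strong correlation with $k$, so the crude covariance bound already suffices there, while the decay-of-correlation estimate of Lemma~\ref{between |k-l| and C} handles every sufficiently distant pair.
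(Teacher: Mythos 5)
Your proof is correct and follows essentially the same route as the paper: the paper's own argument splits the double sum at $|k-l| < M_{\eta,\epsilon,\alpha}$ versus $|k-l| \ge M_{\eta,\epsilon,\alpha}$, bounds the near-diagonal block (diagonal included) by $\sum_k \P(A_k) = O(\sqrt{\log n})$ via \eqref{eq:bounds4Ak}, and bounds the far block by the convergent sum from Lemma~\ref{between |k-l| and C}, exactly as you do. Your separate treatment of the diagonal $j=0$ is only a cosmetic refinement of the paper's single near-diagonal block.
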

\begin{proof}
We divide the summation into 
\begin{align*}
\sum_{k,l= 1}^n 
&= \sum_{k,l= 1 \atop{|k-l| < M_{\eta, \epsilon,\alpha}}}^{n} 
+ \sum_{k,l= 1 \atop{|k-l| \ge M_{\eta, \epsilon,\alpha}}}^{n} 
=: I_1(n) + I_2(n).
\end{align*}
As for $I_1(n)$, from \eqref{eq:bounds4Ak}, we see that 
\begin{align*}
 |I_1(n)| 
&\le \sum_{k,l= 1 \atop{|k-l| < M_{\eta, \epsilon,\alpha}}}^n \P(A_k)\\
&\le 2 M_{\eta, \epsilon,\alpha}\left(1+c_{\alpha}\sum_{k=
 2}^{n}\frac{1}{k}\frac{1}{\sqrt{\log k}}\right)\\ 
&= O(\sqrt{\log n}) \quad \text{as $n\to \infty$}.
\end{align*}
On the other hand, it follows from Lemma~\ref{between |k-l| and C} that
\begin{align*}
   |I_2(n)| 
&\le 2 \sum_{k= 1}^{n}\sum_{{j \ge M_{\eta,
 \epsilon,\alpha}}} 
C_{\eta, \alpha}\frac{1}{k^{1+\epsilon}}\frac{1}{j^{1+\eta-2\epsilon}}=O(1)\text{\quad
 as $n\to \infty$}. 
   \end{align*}
Therefore, we obtain \eqref{B-numerator}. 
\end{proof}

Finally, we turn to the proof of \eqref{eq:subseq}. 
\begin{proof}[Proof of \eqref{eq:subseq}] 
Erd\H{o}s-R\'{e}nyi's Borel-Cantelli lemma (Theorem~\ref{thm:weak
 indep}), together with Lemmas~\ref{lem:lowerLIL:the
 first half} and \ref{lem:lowerLIL:the latter part}, yields \eqref{eq:subseq}. 
\end{proof}

As a final remark, we provide the proof of  Corollary~\ref{cor}.
\begin{proof}[Proof of Corollary~\ref{cor}]
 In the cases $p=p'$ and $0<p\vee p'<3/4$, the result
 follows from Theorem~\ref{asymptotic behavior the distance
 with same parameters} and Theorem~\ref{asymptotic behavior
 the distance with different parameters},
 respectively. Therefore, it remains to verify the two
 cases: (i) $p'<p$ and $p>3/4$, and (ii) $p'<p=3/4$.  
\begin{itemize}
 \item [(i)] If $p'<p$ and $p>3/4$, then it follows from (\ref{superdiffusive L}) that
\[
        \lim_{n\to \infty}\frac{|S_n-S_n'|}{n^{2p-1}} 
        =|L_{p,q}|>0 \quad \text{a.s.} 
\]
This means that the distance between two ERWs diverges to
       infinity, i.e., they meet each other only finitely often almost surely. 
 \item [(ii)] Similarly, if $p'<p=3/4$, we obtain the
       following result from the
       LIL for ERWs (cf. \cite{bercu2017martingale})
\[
\limsup_{n\to \infty}\pm\frac{S_n-S_n'}{\sqrt{2n \log
       n \log \log \log n}} = 1 \quad \text{a.s.} 
\]
This means that they meet each other infinitely often almost surely. 
\end{itemize}
\end{proof}

\textbf{Acknowledgment.} 
The authors would like to thank the anonymous referee for their careful reading and valuable comments.
TS was supported by JSPS KAKENHI Grant
Numbers JP22H05105, JP23H01077 and JP23K25774, and also supported in part
JP21H04432 and JP24KK0060. SS was supported by WISE program (MEXT) at Kyushu
University, and also supported in part JP23K25774 and the Kyushu University Fund ``Human Resource Development Initiative in Mathematics for Industry''. 

\bibliographystyle{plain}
\bibliography{reference}

\end{document}